\theoremstyle{plain}
\newtheorem{theorem}{Theorem}[section]
\newtheorem{lemma}[theorem]{Lemma}
\theoremstyle{remark}
\newtheorem{remark}[theorem]{Remark}
\theoremstyle{definition}
\numberwithin{equation}{section} 
\numberwithin{figure}{section}
\numberwithin{table}{section}
\DeclarePairedDelimiter{\norm}{\lVert}{\rVert} 
\DeclarePairedDelimiter{\abs}{\lvert}{\rvert} 
\DeclarePairedDelimiter{\inner}{\langle}{\rangle}
\newcommand{\eref}[1]{\eqref{#1}}
\newcommand{\sref}[1]{section~\ref{#1}}
\newcommand{\fref}[1]{figure~\ref{#1}}
\newcommand{\Fref}[1]{Figure~\ref{#1}}
\newcommand{\absm}[1]{\abs{#1}}
\newcommand{\normm}[1]{\norm{#1}}
\newcommand{\innerm}[1]{\inner{#1}}
\newcommand{\gamD}{\Gamma^\textup{D}}
\newcommand{\gamN}{\Gamma^\textup{N}}
\newcommand{\para}[1]{\left(#1\right)}
\newcommand{\rum}[1]{\mathbb{#1}}
\newcommand{\arum}{\ensuremath{\mathcal{A}}}
\newcommand{\srum}{\ensuremath{\mathcal{S}}}
\newcommand{\ssc}[1]{\ensuremath{\mathcal{S}_{#1}}}
\newcommand{\dsr}{\delta\sigma}
\newcommand{\po}{{\partial\Omega}}
\newcommand{\fob}{\mathcal{F}}
\newcommand{\hmhalf}{H_\diamond^{-1/2}(\partial\Omega)}
\newcommand{\ds}{\eta}
\newcommand{\imbed}{\hookrightarrow}
\newcommand{\proja}{\mathcal{P}_{\arum_0}}
\DeclareMathOperator*{\supp}{supp}
\DeclareMathOperator*{\argmin}{argmin}
\DeclareMathOperator{\sign}{sgn}
\begin{document}



\title{Sparsity prior for electrical impedance tomography with partial data}

\author{Henrik Garde and Kim Knudsen\\\vspace{6pt}  {\em{Department of Applied
  Mathematics and Computer Science, Technical University of Denmark,
 2800 Kgs. Lyngby, Denmark}}
\\\received{December 2014} }

\maketitle

\begin{abstract}
  This paper focuses on prior information for improved sparsity
  reconstruction in electrical impedance tomography with partial data,
  i.e.\ data measured only on subsets of the boundary. Sparsity is
  enforced using an $\ell_1$ norm of the basis coefficients as the
  penalty term in a Tikhonov functional, and prior information is
  incorporated by applying a spatially distributed regularization
  parameter. The resulting optimization problem allows great
  flexibility with respect to the choice of measurement boundaries and
  incorporation of prior knowledge. The problem is solved using a
  generalized conditional gradient method applying soft
  thresholding. Numerical examples show that the addition of prior
  information in the proposed algorithm gives vastly improved
  reconstructions even for the partial data problem. The method is in
  addition compared to a total variation approach.
\end{abstract}

\begin{keywords}
Electrical impedance tomography; inverse boundary value problem; ill-posed problem; partial data; sparsity
\end{keywords}

\begin{classcode}65N20; 65N21\end{classcode}

\section{Introduction}

The inverse problem in electrical impedance tomography (EIT) consists
of reconstructing an electrical conductivity distribution in the
interior of an object from electro-static boundary measurements on the
surface of the object. EIT is an emerging technology with applications
in medical imaging \cite{Holder2005}, geophysics \cite{Abubakar2009} and
industrial tomography \cite{York2001}. The underlying mathematical problem is
known as the Calder\'on problem in recognition of Calder\'on's seminal
paper \cite{Calderon1980}.

Consider a bounded domain $\Omega\subset \mathbb{R}^n,\; n\geq 2, $ with smooth boundary $\po.$ In
order to consider partial boundary measurements we introduce the
subsets $\gamN,\gamD \subseteq \po$ for the Neumann and Dirichlet data
respectively. Let $\sigma \in L^\infty(\Omega)$ with $0< c\leq \sigma$ a.e.
denote the conductivity distribution in $\Omega$. Applying a boundary
current flux $g$ (Neumann condition) through $\gamN\subseteq \po$
gives rise to the interior electric potential $u$ characterized as the
solution to
\begin{equation}
  \nabla \cdot(\sigma\nabla u) = 0 \text{ in } \Omega,\quad	\sigma \frac{\partial u}{\partial \nu} = g \text{ on } \partial\Omega, \quad \int_{\gamD} u|_{\po}\,ds = 0, \label{pde}
\end{equation}
where $\nu$ is an outward unit normal to $\po.$ The latter condition
in \eref{pde} is a grounding of the total electric potential along the
subset $\gamD\subseteq \po.$ To be precise we define the spaces 
\begin{align*}
  L_\diamond^2(\po) &\equiv \{g \in L^2(\partial\Omega) \mid \int_{\po}
  g \,ds = 0\}, \\ 
  \hmhalf &\equiv \{g \in H^{-1/2}(\partial\Omega)
  \mid \inner{g,1} = 0\},
\end{align*}
consisting of boundary functions with mean zero, and the spaces
\begin{align*}
  H_{\gamD}^1(\Omega) &\equiv \{u\in H^1(\Omega)\mid u|_{\po} \in
  H_{\gamD}^{1/2}(\po)\,\}, \\ 
  H_{\gamD}^{1/2}(\po) &\equiv \{f\in H^{1/2}(\po)\mid \int_{\gamD} f\,ds=0\},
\end{align*}
consisting of functions with mean zero on $\gamD$ designed to
encompass the partial boundary data.  Using standard elliptic theory
it follows that \eref{pde} has a unique solution $u \in
H_{\gamD}^1(\Omega)$ for any $g \in \hmhalf$. This defines the
Neumann-to-Dirichlet map (ND-map) $\Lambda_\sigma$ as an operator from
$H_\diamond^{-1/2}(\po)$ into $H_{\gamD}^{1/2}(\po)$ by $g \mapsto
u|_{\po}$, and the partial ND-map as $g \mapsto (\Lambda_\sigma
g)|_{\gamD}$.

The data for the classical Calder\'on problem is the full operator
$\Lambda_{\sigma}$ with $\gamD = \gamN = \partial\Omega.$
The problem is well-studied and there are numerous publications addressing different
aspects of its solution; we mention only a few: the uniqueness and
reconstruction problem was solved in \cite{SylvesterUhlmann1987,
  Nachman1988, Novikov1988, Nachman1996, AstalaPaivarinta2006,
  HabermanTataru2013} using the so called complex geometrical optics (CGO)
solutions; for a recent survey see \cite{Uhlmann2009}. Stability
estimates of log type were obtained in \cite{Alessandrini1988,
  Alessandrini1990} and shown to be optimal in
\cite{Mandache2001}. Thus any computational algorithm must rely on
regularization.  Such computational regularization algorithms
following the CGO approach were designed, implemented and analysed in
\cite{SiltanenMuellerIsaacson2001,KnudsenLassasMuellerSiltanen2009,
  BikowskiKnudsenMueller2011,DelbaryHansenKnudsen2012,DelbaryKnudsen2014}.

Recently the partial data Calder\'on problem has been studied
intensively. In 3D uniqueness has been proved under certain conditions
on $\gamD$ and $\gamN$
\cite{BukhgeimUhlmann2002,KenigSjostrandUhlmann2007,Knudsen2006,Zhang2012,Isakov2007}, and
in 2D the general problem with localized data i.e. $\gamD = \gamN =
\Gamma$ for some, possibly small, subset $\Gamma\subseteq\po$ has been
shown to posses uniqueness \cite{ImanuvilovUhlmannYamamoto2010}. Also
stability estimates of log-log type have been obtained for the partial
problem \cite{HeckWang2006}; this suggests that the partial data
problem is even more ill-posed and hence requires more regularization
than the full data problem. Recently a computational algorithm for the
partial data problem in 2D was suggested and investigated in
\cite{HamiltonSiltanen}.

A general approach to linear inverse problems with sparsity
regularization was given in \cite{daubechies2004}, and in
\cite{bredies2009,bonesky2007} the method
was adapted to non-linear problems using a so-called generalized conditional
gradient method. In \cite{gehre2012,jin2011,jin2012} the method was applied to the reconstruction problem in EIT with full boundary
data. For other approaches to EIT using optimization methods we refer
to \cite{Borcea2002}.

In this paper we will focus on the partial data problem for which we
develop a reconstruction algorithm based on a least squares
formulation with sparsity regularization. The results are twofold:
first we extend the full data algorithm of \cite{jin2012} to the case
of partial data, second we show how prior information about the spatial
location of the perturbation in the conductivity can be used in the
design of a spatially varying regularization parameter. We will restrict the treatment to 2D, however everything extends to 3D with some minor assumptions on the regularity of the Neumann data \cite{GardeKnudsen2014}.

The data considered here consist of a finite
number of Cauchy data taken on the subsets $\gamD$ and $\gamN,$ i.e.
\begin{equation}
	\{(f_k,g_k) \mid g_k \in \hmhalf, \; \supp(g_k)\subseteq \gamN, f_k =  \Lambda_{\sigma} g_k|_{\gamD} \}_{k=1}^K,\; K \in \mathbb{N}. \label{data}
\end{equation}
We assume that the true conductivity is given as $\sigma = \sigma_0 + \delta\sigma$, where $\sigma_0$ is a known background conductivity. Define the closed and convex subset 
\begin{equation}
	\arum_0 \equiv \{\delta\gamma \in H_0^1(\Omega)\mid c\leq \sigma_0+\delta\gamma\leq c^{-1} \text{  a.e. in } \Omega\}  \label{a0ref}
\end{equation}
for some $c \in (0,1)$, and $\sigma_0\in H^1(\Omega)$ where $c\leq \sigma_0\leq c^{-1}$. Similarly define 
\[
	\arum\equiv \arum_0 + \sigma_0 = \{\gamma\in H^1(\Omega)\mid c\leq \gamma\leq c^{-1} \text{ a.e. in } \Omega, \gamma|_{\po} = \sigma_0|_{\po}\}.
\]
The inverse problem is then to approximate $\delta\sigma\in\arum_0$ given the data \eref{data}.

Let $\{\psi_j\}$ denote a chosen orthonormal basis for $H_0^1(\Omega).$ For sparsity regularization we approximate $\delta\sigma$ by $\argmin_{\delta\gamma\in\arum_0}\Psi(\delta\gamma)$ using the following Tikhonov functional \cite{jin2012}
\begin{equation}
	\Psi(\delta\gamma) \equiv \sum_{k=1}^K R_k(\delta\gamma) + P(\delta\gamma), \enskip\delta\gamma\in\arum_0, \label{psieq}
\end{equation} 
with
\[
	R_k(\delta\gamma) \equiv \frac{1}{2}\normm{\Lambda_{\sigma_0+\delta\gamma}g_k - f_k}_{L^2(\gamD)}^2, \quad P(\delta\gamma) \equiv \sum_{j=1}^\infty \alpha_j\absm{c_j},
\]
for $c_j \equiv \inner{\delta\gamma,\psi_j}$. The regularization parameter $\alpha_j>0$ for the
sparsity-promoting $\ell_1$ penalty term $P$ is distributed such that
each basis coefficient can be regularized differently; we will return to
this in \sref{sec:prior}. It should be noted how easy and natural the
use of partial data is introduced in this way, simply by only
minimizing the discrepancy on $\gamD$ on which the Dirichlet data is known and ignoring the rest of the boundary.

This paper is organised as follows: in \sref{sec:SparseReconstruction}
we derive the Fr\'echet derivative of $R_k$ and reformulate the
optimization problem using the generalized conditional gradient method as a
sequence of linearized optimization problems. In \sref{sec:prior} we
explain the idea of the spatially dependent regularization parameter
designed for the use of prior information. Then in
\sref{sec:NumericalResults} we show the feasibility of the algorithm
by several numerical examples, and finally we conclude in
\sref{sec:conclusions}.

\section{Sparse Reconstruction}\label{sec:SparseReconstruction}

In this section the sparse reconstruction of $\dsr$ based on the optimization problem \eqref{psieq}, is investigated for a bounded domain $\Omega\subset\rum{R}^2$ with smooth boundary $\partial\Omega$. The penalty term emphasizes that $\dsr$ should only be expanded by few basis functions in a given orthonormal basis. Using a distributed regularization parameter, it is
possible to further apply prior information about which basis functions that should be included in the expansion of $\dsr$. The partial data problem comes into play in the discrepancy term, in which we only fit the data on part of the boundary. Ultimately, this leads to the algorithm given in Algorithm \ref{alg1} at the end of this section. 

Denote by $F_g(\sigma)$ the unique solution to \eref{pde} and let $\fob_g(\sigma)$ be its trace (note that $\Lambda_\sigma g = \fob_g(\sigma)$). Let $\gamma\in\arum$, $g\in L^p(\po)\cap\hmhalf$ for $p>1$, then following the proofs of Theorem 2.2 and Corollary 2.1 in \cite{jin2011} whilst applying the partial boundary $\gamD$ we have
\begin{equation}
	\lim_{\substack{\normm{\ds}_{H^1(\Omega)}\to 0 \\ \gamma+\ds\in\arum}}\frac{\normm{\fob_g(\gamma+\ds)-\fob_g(\gamma) - (\fob_g)'_\gamma\ds}_{H^{1/2}_{\gamD}( \partial\Omega)}}{\normm{\ds}_{H^1(\Omega)}} = 0. \label{fpplimit}
\end{equation}
Here $(\fob_g)'_\gamma$ is the linear map, that maps $\eta$ to $w|_{\po}$, where $w$ is the unique solution to
\begin{equation}
	-\nabla\cdot(\gamma\nabla w) = \nabla\cdot(\ds\nabla F_g(\gamma)) \text{ in } \Omega, \quad \sigma\frac{\partial w}{\partial\nu} = 0 \text{ on } \partial\Omega, \quad \int_{\gamD}w|_{\partial\Omega}\,ds = 0. \label{fprimepde}
\end{equation}
It is noted that $(\fob_g)'_\gamma$ resembles a Fr\'echet derivative of $\fob_g$ evaluated at $\gamma$ due to \eref{fpplimit}, however $\arum$ is not a linear vector space, thus the requirement $\gamma,\gamma+\ds\in\arum$.

The first step in minimizing $\Psi$ using a gradient descent type iterative algorithm is to determine a derivative to the discrepancy terms $R_k$.
\begin{lemma} 
	Let $\gamma=\sigma_0+\delta\gamma$ for $\delta\gamma\in\arum_0$, and $\chi_{\gamD}$ be a characteristic function on $\gamD$. Then 
	\begin{equation}
		G_k \equiv -\nabla F_{g_k}(\gamma)\cdot\nabla F_{\chi_{\gamD}(\Lambda_\gamma g_k-f_k)}(\gamma)\in L^r(\Omega)\subset H^{-1}(\Omega) \label{nablaJ}
	\end{equation}
	for some $r>1$, and the Fr\'echet derivative $(R_k)'_{\delta\gamma}$ of $R_k$ on $H_0^1(\Omega)$ evaluated at $\delta\gamma$ is given by
	\begin{equation}
		(R_k)'_{\delta\gamma}\eta = \int_{\Omega} G_k\ds \,dx, \enskip \delta\gamma+\ds\in \arum_0. \label{Jprimeeq}
	\end{equation}
\end{lemma}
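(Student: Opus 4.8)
The plan is to combine the chain rule with a standard adjoint-state computation, reserving a higher-integrability estimate for the regularity claim. Throughout I write $u_k \equiv F_{g_k}(\gamma)$ and $\rho_k \equiv \Lambda_\gamma g_k - f_k = \fob_{g_k}(\gamma) - f_k$ for the boundary residual. Since the squared norm $\tfrac12\normm{\cdot}_{L^2(\gamD)}^2$ is smooth and $\fob_{g_k}$ is differentiable at $\gamma$ in the sense of \eref{fpplimit} --- where the $H_{\gamD}^{1/2}(\po)$ norm controls the $L^2(\gamD)$ norm through the continuous embedding $H_{\gamD}^{1/2}(\po)\imbed L^2(\po)$ --- the chain rule gives
\begin{equation*}
	(R_k)'_{\delta\gamma}\eta = \innerm{\rho_k, w}_{L^2(\gamD)} = \int_{\gamD}\rho_k\, w\,ds, \qquad w \equiv (\fob_{g_k})'_\gamma\eta,
\end{equation*}
with $w$ the solution of \eref{fprimepde}. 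The task is thus to rewrite this boundary integral as a volume integral against $\eta$.

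To that end I would introduce the adjoint potential $v \equiv F_{\chi_{\gamD}\rho_k}(\gamma)$, i.e.\ the solution of \eref{pde} with conductivity $\gamma$ and Neumann datum $\chi_{\gamD}\rho_k$. This datum is admissible: both $\Lambda_\gamma g_k$ and $f_k$ lie in $H_{\gamD}^{1/2}(\po)$, so $\int_{\gamD}\rho_k\,ds = 0$ and $\chi_{\gamD}\rho_k \in L^2(\po)\cap\hmhalf$; hence $v$ exists, is unique, and lies in $H_{\gamD}^1(\Omega)$. Writing the weak formulations of \eref{fprimepde} and of the $v$-problem over the test space $H_{\gamD}^1(\Omega)$ gives, for every $\phi\in H_{\gamD}^1(\Omega)$,
\begin{align*}
	\int_\Omega \gamma\nabla w\cdot\nabla\phi\,dx &= -\int_\Omega \eta\,\nabla u_k\cdot\nabla\phi\,dx, \\
	\int_\Omega \gamma\nabla v\cdot\nabla\phi\,dx &= \int_{\gamD}\rho_k\,\phi\,ds,
\end{align*}
where in the first identity the boundary contribution $\int_{\po}\eta\,\partial_\nu u_k\,\phi\,ds$ drops out because $\eta\in H_0^1(\Omega)$ vanishes on $\po$.

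The key step is then the symmetry of the bilinear form $(\phi,\varphi)\mapsto\int_\Omega\gamma\nabla\phi\cdot\nabla\varphi\,dx$: testing the $w$-equation with $\phi = v$ and the $v$-equation with $\phi = w$ equates the two left-hand sides, so that
\begin{equation*}
	\int_{\gamD}\rho_k\,w\,ds = \int_\Omega \gamma\nabla v\cdot\nabla w\,dx = -\int_\Omega \eta\,\nabla u_k\cdot\nabla v\,dx.
\end{equation*}
Substituting back yields $(R_k)'_{\delta\gamma}\eta = \int_\Omega(-\nabla u_k\cdot\nabla v)\,\eta\,dx = \int_\Omega G_k\,\eta\,dx$ with $G_k = -\nabla F_{g_k}(\gamma)\cdot\nabla F_{\chi_{\gamD}\rho_k}(\gamma)$, which is exactly \eref{nablaJ}--\eref{Jprimeeq}.

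It remains to justify the regularity in \eref{nablaJ}, and this is the one genuinely analytic point rather than bookkeeping. Because $\gamma$ is only $L^\infty$ while uniformly elliptic, $\nabla u_k$ and $\nabla v$ are not square-integrable to any higher order for free; the right tool is the Meyers higher-integrability estimate, which guarantees $\nabla u_k,\nabla v\in L^p(\Omega)$ for some $p>2$ depending only on the ellipticity constant $c$ (the fact that $g_k$ and $\chi_{\gamD}\rho_k$ are better than $L^1$ on the boundary is what places the two Neumann problems in this scale). H\"older's inequality with exponents $p/2$ and its conjugate then gives $G_k = -\nabla u_k\cdot\nabla v\in L^r(\Omega)$ with $1/r = 2/p < 1$, so $r>1$. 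Finally, in two dimensions $H_0^1(\Omega)\imbed L^q(\Omega)$ for every finite $q$, whence by duality $L^r(\Omega)\imbed H^{-1}(\Omega)$ for every $r>1$; this is precisely what makes $\eta\mapsto\int_\Omega G_k\,\eta\,dx$ a bounded functional on $H_0^1(\Omega)$, and hence a legitimate Fr\'echet derivative. I expect the Meyers estimate to be the main obstacle, since every other step is either the chain rule or a symmetric-bilinear-form manipulation.
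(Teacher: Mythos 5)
Your proposal is correct and is essentially the paper's own proof: the chain rule reduces $(R_k)'_{\delta\gamma}\eta$ to the boundary pairing $\int_{\gamD}(\Lambda_\gamma g_k-f_k)\,w\,ds$, the adjoint potential $F_{\chi_{\gamD}(\Lambda_\gamma g_k-f_k)}(\gamma)$ together with the symmetry of the form $(\phi,\varphi)\mapsto\int_\Omega\gamma\nabla\phi\cdot\nabla\varphi\,dx$ converts this to $\int_\Omega G_k\,\eta\,dx$, and a Meyers-type $W^{1,q}$ estimate plus H\"older plus 2D Sobolev duality gives $G_k\in L^r(\Omega)\subset H^{-1}(\Omega)$. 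The one small deviation is in the regularity step: the paper applies the higher-integrability estimate only to the adjoint potential, whose Neumann datum genuinely lies in $L^2_\diamond(\po)$, and pairs $\nabla F_{\chi_{\gamD}(\Lambda_\gamma g_k-f_k)}(\gamma)\in L^q(\Omega)$, $q>2$, with the plain energy bound $\nabla F_{g_k}(\gamma)\in L^2(\Omega)$ (so $1/r=1/2+1/q$), whereas you invoke Meyers for both factors and assert the exponent depends only on the ellipticity constant --- under the standing assumption $g_k\in L^p(\po)\cap\hmhalf$, $p>1$, the attainable exponent for $F_{g_k}(\gamma)$ is limited by the data's integrability as well, so the paper's asymmetric split is the safer (and slightly more economical) variant.
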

\begin{proof}
	For the proof the index $k$ is suppressed. First it is proved that $G\in L^r(\Omega)$ for some $r>1$, which is shown by estimates on $F_g(\gamma)$ and $F_h(\gamma)$ where $h \equiv \chi_{\gamD}(\Lambda_\gamma g-f)$. Note that $\Lambda_\gamma g \in H_{\gamD}^{1/2}(\po)$ and $f\in L^2_\diamond(\gamD)$, i.e. $h \in L^2_\diamond(\po) \subset L^2(\po)\cap H_\diamond^{-1/2}(\po)$. Now using \cite[Theorem 3.1]{jin2011}, there exists $Q>2$ such that
	\begin{equation}
		\normm{F_h(\gamma)}_{W^{1,q}(\Omega)} \leq C\normm{h}_{L^2(\po)}, \label{Fweq}
	\end{equation}
	where $q\in(2,Q)\cap [2,4]$. Since $F_g(\gamma)\in H_{\gamD}^1(\Omega)$ then $\absm{\nabla F_g(\gamma)} \in L^2(\Omega)$. It has already been established in \eref{Fweq} that $F_h(\gamma) \in W^{1,q}(\Omega)$ for $q\in(2,\min\{Q,4\})$, so $\absm{\nabla F_h(\gamma)}\in L^q(\Omega)$. By H{\"o}lder's generalized inequality
	\[
		G = -\nabla F_g(\gamma)\cdot \nabla F_h(\gamma) \in L^r(\Omega), \enskip \tfrac{1}{r} = \tfrac{1}{2} + \tfrac{1}{q},
	\]
	and as $q > 2$ then $r > 1$. Let $r'$ be the conjugate exponent to $r$, then $r'\in[1,\infty)$, i.e. the Sobolev imbedding theorem \cite{sobolev} implies that $H^1(\Omega)\imbed L^{r'}(\Omega)$ as $\Omega\subset\rum{R}^2$. Thus $G \in (L^{r'}(\Omega))'\subset (H^1(\Omega))' \subset (H_0^1(\Omega))' = H^{-1}(\Omega)$.
	
	Now it will be shown that $R'_{\delta\gamma}$ can be identified with $G$. $R'_{\delta\gamma}\eta$ is by the chain rule (utilizing that $\Lambda_\gamma g = \fob_g(\gamma)$) given as
	\begin{equation}
		R'_{\delta\gamma}\eta = \int_\po \chi_{\gamD}(\Lambda_\gamma g-f)(\fob_{g})_\gamma'\ds \,ds, \label{expectedprime}
	\end{equation}
	where $\chi_{\gamD}$ is enforcing that the integral is over $\gamD$. The weak formulations of \eref{pde}, with Neumann data $\chi_{\gamD}(\Lambda_\gamma g-f)$, and \eref{fprimepde} are
	\begin{align}
		\int_{\Omega} \gamma \nabla F_{\chi_{\gamD}(\Lambda_\gamma g-f)}(\gamma)\cdot \nabla v\,dx &= \int_{\po} \chi_{\gamD}(\Lambda_\gamma g-f)v|_{\po}\,ds, \enskip \forall v\in H^1(\Omega), \label{weak1} \\
		\int_{\Omega} \gamma \nabla w\cdot \nabla v\,dx &= -\int_{\Omega} \ds\nabla F_{g}(\gamma)\cdot \nabla v\,dx, \enskip \forall v\in H^1(\Omega). \label{weak2}
	\end{align}
	Now by letting $v \equiv w$ in \eref{weak1} and $v \equiv F_{\chi_{\gamD}(\Lambda_\gamma g-f)}(\gamma)$ in \eref{weak2}, we obtain using the definition $w|_{\po} = (\fob_g)'_\gamma\eta$ that
	\begin{align*}
		R'_{\delta\gamma}\ds &= \int_\po \chi_{\gamD}(\Lambda_\gamma g-f)(\fob_{g})_\gamma'\ds \,ds = \int_{\Omega} \gamma \nabla F_{\chi_{\gamD}(\Lambda_\gamma g-f)}(\gamma)\cdot \nabla w\,dx \\
		&=  -\int_{\Omega} \ds\nabla F_{g}(\gamma)\cdot \nabla F_{\chi_{\gamD}(\Lambda_\gamma g-f)}(\gamma)\,dx = \int_{\Omega}G\ds \,dx.
	\end{align*}
\end{proof}
\begin{remark}
	It should be noted that $(R_k)'_{\delta\gamma}$ is related to the Fr\'echet derivative $\Lambda_\gamma'$ of $\gamma\mapsto \Lambda_\gamma$ evaluated at $\gamma$, by $(R_k)'_{\delta\gamma}\eta = \int_{\gamD}(\Lambda_\gamma g_k-f_k)\Lambda_\gamma'[\eta]g_k ds$.
\end{remark}
Define 
\[
	R'_{\delta\gamma} \equiv \sum_{k=1}^K (R_k)'_{\delta\gamma} = -\sum_{k=1}^K\nabla F_{g_k}(\gamma)\cdot\nabla F_{\chi_{\gamD}(\Lambda_{\gamma} g_k-f_k)}(\gamma).
\]
For a gradient type descent method, we seek to find a direction $\eta$ for which the discrepancy decreases. As $R_{\delta\gamma}'\in H^{-1}(\Omega)$ it is known from Riesz' representation theorem that there exists a unique function in $H_0^1(\Omega)$, denoted by $\nabla_s R(\delta\gamma)$, such that 
\begin{equation}
	R_{\delta\gamma}'\eta = \inner{\nabla_s R(\delta\gamma),\eta}_{H^1(\Omega)},\enskip \eta\in H_0^1(\Omega). \label{sobograd}
\end{equation}
Now $\eta \equiv -\nabla_s R(\delta\gamma)$ points in the steepest descend direction among the viable directions. Furthermore, since $\nabla_s R(\delta\gamma)|_{\po} = 0$ the boundary condition $\delta\sigma|_{\po} = 0$ will automatically be fulfilled for the approximation. In \cite{sobolevgradient} $\nabla_s R(\delta\gamma)$ is called a Sobolev-gradient, and it is the unique solution to 
\[
	(-\Delta+1)v=R_{\delta\gamma}' \text{ in } \Omega, \quad v = 0\text{ on } \po,
\]
for which \eref{sobograd} is the weak formulation. In each iteration step we need to determine a step size $s_i$ for an algorithm resembling a steepest descent $\delta\gamma_{i+1} = \delta\gamma_i - s_i\nabla_sR(\delta\gamma_i)$. Here a Barzilai-Borwein step size rule \cite{BBrules,sparsa,jin2012} will be applied, for which we determine $s_i$ such that $\frac{1}{s_i}(\delta\gamma_i-\delta\gamma_{i-1}) = \frac{1}{s_i}(\gamma_i-\gamma_{i-1})\simeq \nabla_s R(\delta\gamma_i)-\nabla_s R(\delta\gamma_{i-1})$ in the least-squares sense
\begin{equation}
	s_i \equiv \argmin_{s} \normm{s^{-1}(\delta\gamma_i-\delta\gamma_{i-1})-(\nabla_s R(\delta\gamma_i)-\nabla_s R(\delta\gamma_{i-1}))}_{H^1(\Omega)}^2. \label{bbrule}
\end{equation}
Assuming that $\innerm{\delta\gamma_i-\delta\gamma_{i-1},\nabla_s R(\delta\gamma_i)-\nabla R(\delta\gamma_{i-1})}_{H^1(\Omega)}\neq 0$ yields
\begin{equation}
	s_i = \frac{\normm{\delta\gamma_i-\delta\gamma_{i-1}}_{H^1(\Omega)}^2}{\innerm{\delta\gamma_i-\delta\gamma_{i-1},\nabla_s R(\delta\gamma_i)-\nabla_sR(\delta\gamma_{i-1})}_{H^1(\Omega)}}. \label{bbstepsize}
\end{equation}
A maximum step size $s_{\max}$ is enforced to avoid the situations where $\innerm{\delta\gamma_i-\delta\gamma_{i-1},\nabla_s R(\delta\gamma_i)-\nabla R(\delta\gamma_{i-1})}_{H^1(\Omega)}\simeq 0$.	

With inspiration from \cite{sparsa}, $s_i$ will be initialized by \eref{bbstepsize}, after which it is thresholded to lie in $[s_{\min},s_{\max}]$, for positive constants $s_{\min}$ and $s_{\max}$. It is noted in \cite{sparsa} that Barzilai-Borwein type step rules lead to faster convergence if we do not restrict $\Psi$ to decrease in every iteration. Allowing an occasional increase in $\Psi$ can be used to avoid places where the method has to take many small steps to ensure the decrease of $\Psi$. Therefore, one makes sure that the following so called weak monotonicity is satisfied, which compares $\Psi(\delta\gamma_{i+1})$ with the most recent $M$ steps.	Let $\tau\in(0,1)$ and $M\in\rum{N}$, then $s_i$ is said to satisfy the weak monotonicity with respect to $M$ and $\tau$ if the following is satisfied \cite{sparsa}
\begin{equation}
	\Psi(\delta\gamma_{i+1}) \leq \max_{i-M+1\leq j \leq i}\Psi(\delta\gamma_j) - \frac{\tau}{2s_i}\normm{\delta\gamma_{i+1}-\delta\gamma_i}_{H^1(\Omega)}^2.\label{weakmono}
\end{equation}
If \eref{weakmono} is not satisfied, the step size $s_i$ is reduced until this is the case. To solve the non-linear minimization problem for \eqref{psieq} we iteratively solve the following linearized problem
\begin{align}
	\zeta_{i+1} &\equiv \argmin_{\delta\gamma\in H_0^1(\Omega)}\left[\frac{1}{2}\normm{\delta\gamma - (\delta\gamma_i-s_i\nabla_sR(\delta\gamma_i))}_{H^1(\Omega)}^2 + s_i\sum_{j=1}^\infty \alpha_j\absm{c_j}\right], \label{upsiloneq} \\
	\delta\gamma_{i+1} &\equiv \proja(\zeta_{i+1}). \notag
\end{align}
Here $\{\psi_j\}$ is an orthonormal basis for $H_0^1(\Omega)$ in the $H^1$-metric, and $\proja$ is a projection of $H_0^1(\Omega)$ onto $\arum_0$ to ensure that \eqref{pde} is solvable (note that $H_0^1(\Omega)$ does not imbed into $L^\infty(\Omega)$, i.e. $\zeta_{i+1}$ may be unbounded). By use of the map $\srum_\beta:\rum{R}\to\rum{R}$ defined below, known as the soft shrinkage/thresholding map with threshold $\beta > 0$,
\begin{equation}
	\ssc{\beta}(x) \equiv \sign(x)\max\{\absm{x}-\beta,0\},\enskip x\in\rum{R}, \label{softoperator}
\end{equation}
the solution to \eref{upsiloneq} is easy to find directly (see also \cite[Section 1.5]{daubechies2004}):
\begin{equation}
	\zeta_{i+1} = \sum_{j=1}^\infty \ssc{s_i\alpha_j}(d_j)\psi_j, \label{softstep}
\end{equation}
where $d_j\equiv\innerm{\delta\gamma_i-s_i\nabla_sR(\delta\gamma_i),\psi_j}_{H^1(\Omega)}$ are the basis coefficients for $\delta\gamma_i-s_i\nabla_sR(\delta\gamma_i)$. 

The projection $\proja : H_0^1(\Omega)\to \arum_0$ is defined as
\begin{equation*}
	\proja(v) \equiv T_c(\sigma_0 + v) - \sigma_0, \enskip v\in H_0^1(\Omega),
\end{equation*}
where $T_c$ is the following truncation that depends on the constant $c\in(0,1)$ in \eref{a0ref}
\begin{equation*}
	T_c(v) \equiv \begin{cases}
		c & \text{where } v < c \text{ a.e.}, \\
		c^{-1} & \text{where } v > c^{-1} \text{ a.e.}, \\
		v & \text{else.} 
	\end{cases}
\end{equation*}
Since $\sigma_0\in H^1(\Omega)$ and $c\leq \sigma_0\leq c^{-1}$, it follows directly from \cite[Lemma 1.2]{Stampacchia_1965} that $T_c$ and $\proja$ are well-defined, and it is easy to see that $\proja$ is a projection. It should also be noted that $0\in \arum_0$ since $c\leq \sigma_0\leq c^{-1}$, thus we may choose $\delta\gamma_0 \equiv 0$ as the initial guess in the algorithm.

The algorithm is summarized in Algorithm \ref{alg1}. In this paper the stopping criteria is when the step size $s_i$ gets below a threshold $s_\text{stop}$.
%
%
\begin{remark}
	Note that $\sum_j \innerm{\delta\gamma_i-s_i\nabla_sR(\delta\gamma_i) , \psi_j}_{H^1(\Omega)} \psi_j$ corresponds to only having the discrepancy term in \eref{upsiloneq}, while the penalty term corresponds to changing these coefficients with the soft thresholding.
\end{remark}
\begin{remark}
	The non-linearity of $\gamma\mapsto\Lambda_\gamma$ leads to a non-convex discrepancy term, i.e. $\Psi$ is non-convex. So the best we can hope is to find a local minimum.
\end{remark}
\begin{algorithm} \caption{Sparse Reconstruction for Partial Data EIT} \label{alg1}
\begin{algorithmic}
	\State Set $\delta\gamma_0 := 0$.
	\While{stopping criteria not reached}
		\State Set $\gamma_i := \sigma_0 + \delta\gamma_i$.
		\State Compute $\Psi(\delta\gamma_i)$.
		\State Compute $R_{\delta\gamma_i}' := -\sum_{k=1}^K\nabla F_{g_k}(\gamma_i)\cdot\nabla F_{\chi_{\gamD}(\Lambda_{\gamma_i} g_k-f_k)}(\gamma_i)$.
		\State Compute $\nabla_s R(\delta\gamma_i)\in H_0^1(\Omega)$ such that $R_{\delta\gamma_i}'\ds = \innerm{\nabla_s R(\delta\gamma_i),\ds}_{H^1(\Omega)}$.
		\State Compute step length $s_i$ by \eref{bbstepsize}, and decrease it till \eref{weakmono} is satisfied.
		\State Compute the basis coefficients $\{d_j\}_{j=1}^\infty$ for $\delta\gamma_i-s_i\nabla_sR(\delta\gamma_i)$.
		\State Update $\delta\gamma_{i+1} := \proja\para{\sum_{j=1}^\infty \ssc{s_i\alpha_j}(d_j)\psi_j}$.	
	\EndWhile
	\State Return final iterate of $\delta\gamma$.
\end{algorithmic}
\end{algorithm}
\begin{remark}
	The main computational cost lies in computing $R_{\delta\gamma_i}'$, which involves solving $2K$ well-posed PDE's (note that $F_{g_k}(\gamma_i)$ can be reused from the evaluation of $\Psi$). It should be noted that each of the $2K$ problems consists of solving the same problem, but with different boundary conditions, which leads to only having to assemble and factorize the FEM matrix once per iteration. 
\end{remark}

\section{Prior Information} \label{sec:prior}

Prior information is typically introduced in the penalty term $P$ for Tikhonov-like functionals, and here the regularization parameter determines how much this prior information is enforced. In the case of sparsity regularization this implies knowledge of how sparse we expect the solution is in general. Instead of applying the same prior information for each basis function, a distributed parameter is applied. Let
\[
	\alpha_j \equiv \alpha\mu_j,
\] 
where $\alpha$ is a usual regularization parameter, corresponding to the case where no prior information is considered about specific basis functions. The $\mu_j\in(0,1]$ will be used to weigh the penalty depending on whether a specific basis function should be included in the expansion of $\dsr$. The $\mu_j$ are chosen as
\[
\mu_j = \begin{cases}
	1, \quad & \text{no prior on $c_j$}, \\ \sim 0, & \text{prior that $c_j\neq 0$},	
\end{cases}
\]
i.e. if we know that a coefficient in the expansion of $\dsr$ should be non-zero, we can choose to penalize that coefficient less. 

\subsection{Applying the FEM basis}

In order to improve the sparsity solution for finding small
inclusions, it seems appropriate to include prior information about
the support of the inclusions. There are different methods available
for obtaining such information assuming piecewise constant
conductivity \cite{kirsch2007,HarrachUllrich2013} or real analytic
conductivity \cite{HarrachSeo2010}. An example of the reconstruction of $\supp\delta\sigma$ is shown in \fref{fig:monomethod}, where it is observed that numerically it is possible to reconstruct a reasonable convex approximation to the support. Thus, it is possible to acquire estimates of $\supp \delta\sigma$ \emph{for free}, in the sense that it is gained directly from the data without further assumptions.

\begin{figure}[h]%
\centering
\begin{subfigure}[b]{.33\linewidth}
	\includegraphics[width = \linewidth, trim = 4cm 4cm 1cm 4cm, clip=true]{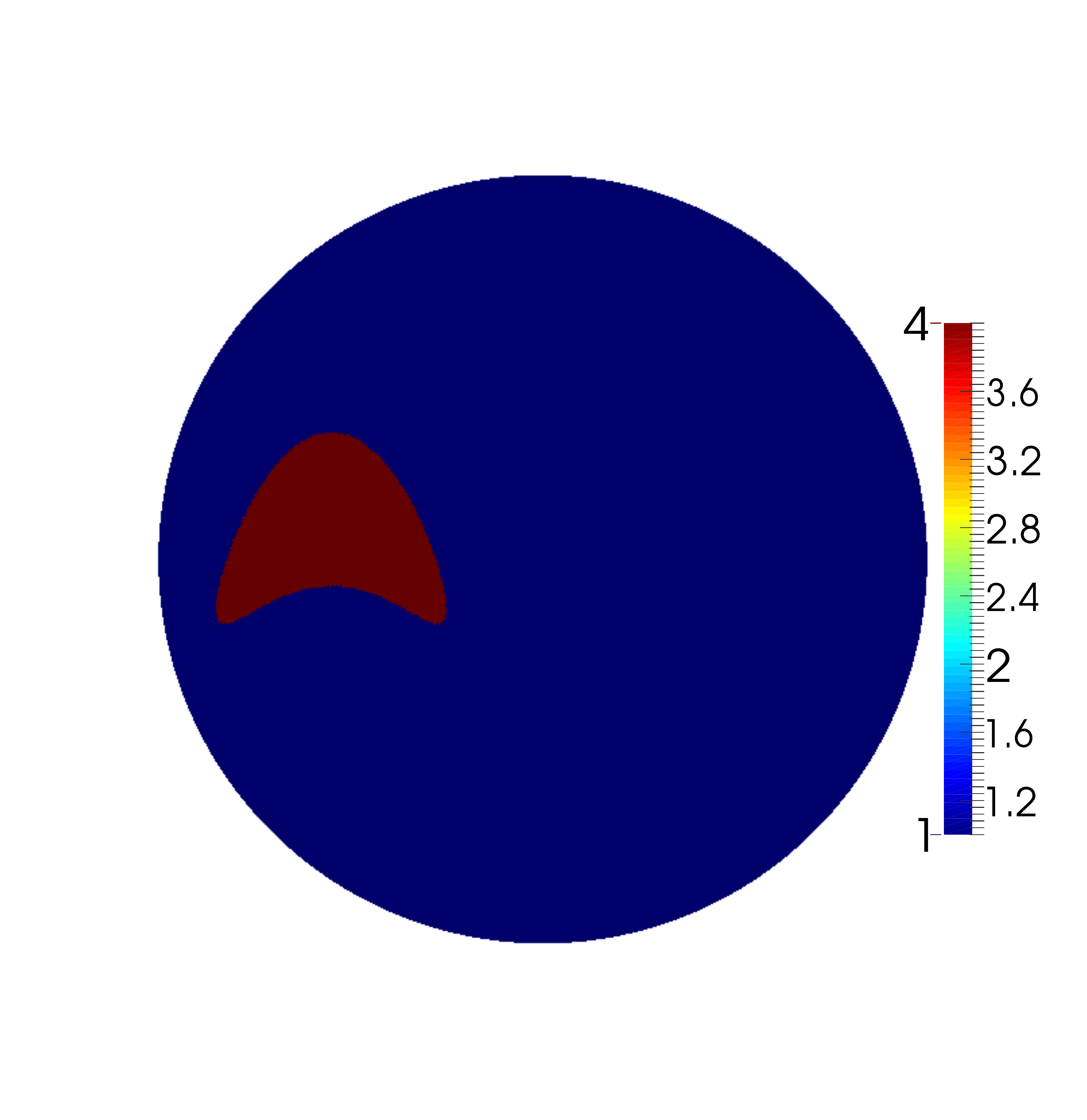}
	\caption{{}} \label{fig:kitephantom_a}
\end{subfigure}%
\hspace{2cm}
\begin{subfigure}[b]{.33\linewidth}
	\includegraphics[width = \linewidth, trim = 4cm 4cm 1cm 4cm, clip=true]{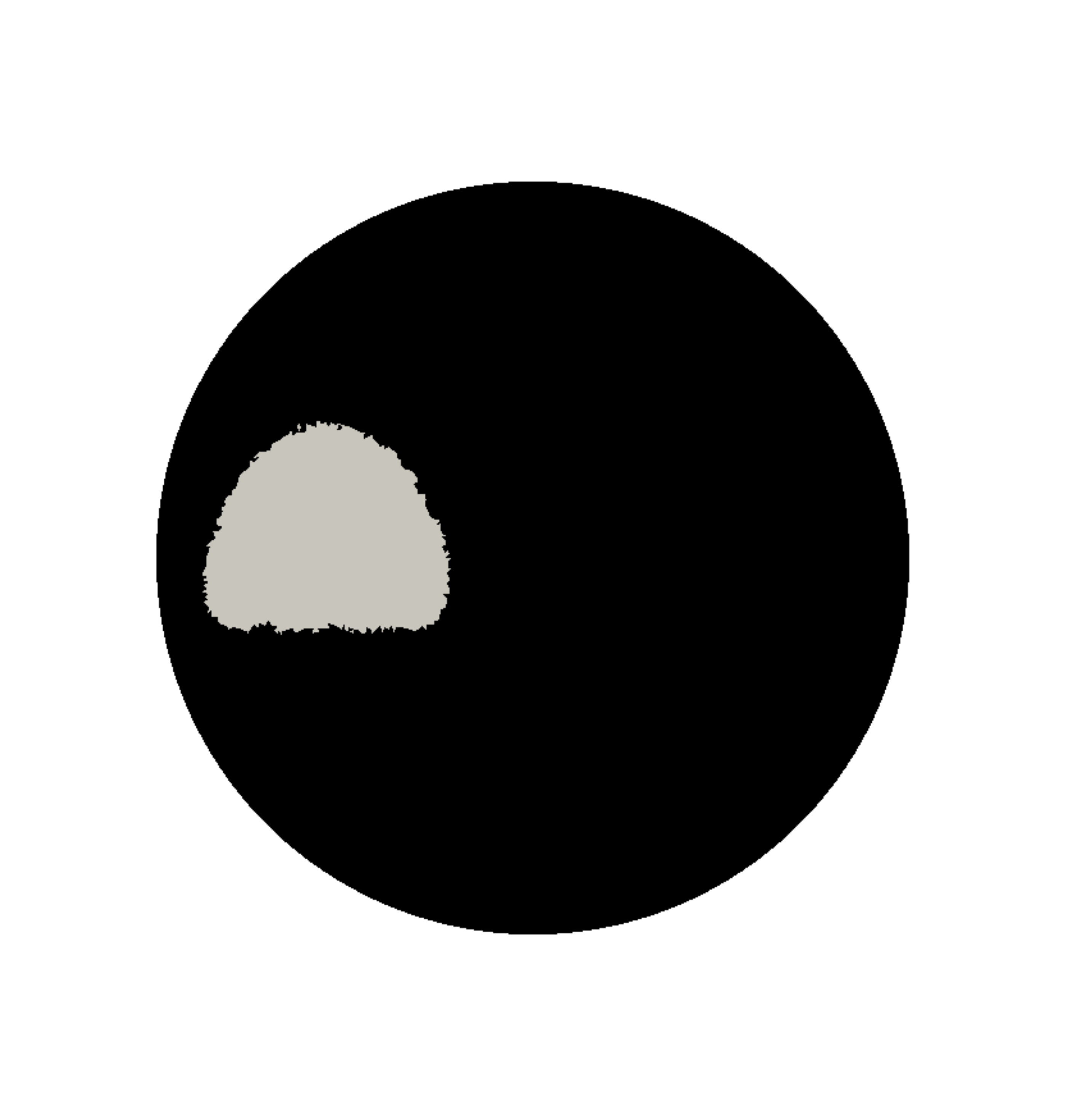}
	\caption{{}} \label{fig:mono}
\end{subfigure}%
\caption{\textbf{(a):} Phantom with kite-shaped piecewise constant inclusion $\delta\sigma$. \textbf{(b):} Reconstruction of $\supp \delta\sigma$ using monotonicity relations from the approach in \cite{HarrachUllrich2013} by use of simulated noiseless data.}
\label{fig:monomethod}
\end{figure}

Another approach is to consider
other reconstruction methods such as total variation regularization
that tends to give good approximations to the support, but has issues
with reconstructing the contrast if the amplitude of $\dsr$ is
large as seen in \sref{sec:comparison}. The idea is to be able to apply such information in the
sparsity algorithm in order to get good contrast reconstruction while
maintaining the correct support, even for the partial data problem. 

Suppose that as a basis we consider a finite element method (FEM)
basis $\{\psi_j\}_{j=1}^N$ for the subspace $V_h\subseteq H_0^1(\Omega)$ of piecewise affine
elements. This basis comprises basis functions that are piecewise
affine with degrees of freedom at the mesh nodes,
i.e.\ $\psi_j(x_k) = \delta_{j,k}$ at mesh node $x_k$ in the applied
mesh. Let $\dsr\in V_h$, then $\dsr(x) = \sum_j \dsr(x_j)\psi_j(x)$,
i.e.\ for each node there is a basis function for which the coefficient
contains local information about the expanded function; this is
convenient when applying prior information about the support of an
inclusion. Note that the FEM basis functions are not mutually orthogonal, since basis functions corresponding to neighbouring nodes are non-negative and have
overlapping support. However, for any non-neighbouring pair of nodes
the corresponding basis functions are orthogonal.

When applying the FEM basis for mesh nodes $\{x_j\}_{j=1}^N$, the corresponding functional is 
\[
	\Psi(\delta\gamma) = \frac{1}{2}\sum_{k=1}^K \normm{\Lambda_{\sigma_0+\delta\gamma}g_k - f_k}_{L^2(\gamD)}^2 + \sum_{j=1}^N \alpha_j\absm{\delta\gamma(x_j)}.
\]
It is evident that the penalty corresponds to determining inclusions with small support, and prior information on the sparsity corresponds to prior information on the support of $\dsr$. We cannot directly utilize \eref{softstep} due to the FEM basis not being an orthonormal basis for $H_0^1(\Omega)$, and instead we suggest the following iteration step:
\begin{align}
	\zeta_{i+1}(x_j) &= \ssc{s_i\alpha_j/\normm{\psi_j}_{L^1(\Omega)}}(\delta\gamma_i(x_j)-s_i\nabla_sR(\delta\gamma_i)(x_j)),\enskip j=1,2,\dots,N, \label{femupdate} \\
	\delta\gamma_{i+1} &= \proja(\zeta_{i+1}). \notag
\end{align}
Note that the regularization parameter will depend quite heavily on the discretization of the mesh, i.e. for the same domain a good regularization parameter $\alpha$ will be much larger on a coarse mesh than on a fine mesh. This is quite inconvenient, and instead we can weigh the regularization parameter according to the mesh cells, by having $\alpha_j \equiv \alpha\beta_j\mu_j$. This leads to a discretization of a weighted $L^1$-norm penalty term:
\[
	\alpha\int_{\Omega} f_\mu \absm{\dsr}\,dx \simeq \alpha\sum_j \beta_j\mu_j\absm{\dsr(x_j)}, 
\]
where $f_\mu : \Omega\to (0,1]$ is continuous and $f_\mu(x_j) = \mu_j$. For a triangulated mesh, the weights $\beta_j$ consists of the node area computed in 2D as 1/3 of the area of $\supp\psi_j$. This corresponds to splitting each cell's area evenly amongst the nodes, and it will not lead to instability on a regular mesh. This will make the choice of $\alpha$ almost independent of the mesh.
\begin{remark}
	The corresponding algorithm with the FEM basis is the same as Algorithm \ref{alg1}, except that the update is applied via \eref{femupdate}.
\end{remark}

\section{Numerical Examples}\label{sec:NumericalResults}

In this section we illustrate, through several examples, the numerical algorithm implemented using the finite element library FEniCS \cite{logg2012a}. First we consider the full data case $\gamD = \gamN = \partial\Omega$ without and with prior information, and then we do the same for the partial data case. Finally, a brief comparison is made with another sparsity promoting method based on total variation. 

For the following examples $\Omega$ is the unit disk in
$\rum{R}^2$. The regularization parameter $\alpha$ is chosen manually
by trial and error.  The other parameters are $\sigma_0\equiv 1$, $M =
5$, $\tau = 10^{-5}$, $s_{\min} = 1$, $s_{\max} = 1000$, and the
stopping criteria is when the step size is below $s_{\text{stop}} =
10^{-3}$. $K = 10$ and the applied Neumann data will be of the form
$g_n^\textup{c}(\theta) \equiv \cos(n\theta)$ and
$g_n^\textup{s}(\theta) \equiv \sin(n\theta)$ for $n = 1,\dots,5$ and
$\theta$ being the angular variable. For the partial data an interval
$\Gamma = \gamN = \gamD = \{\theta\in(\theta_1,\theta_2)\}$ is
considered, and $g_n^\textup{c}$ and $g_n^\textup{s}$ are scaled and
translated such that they have $n$ periods in the interval.

When applying prior information, the coefficients $\mu_j$ are chosen as $10^{-2}$ where the support of $\dsr$ is assumed, and $1$ elsewhere. It should be noted that in order to get fast transitions for sharp edges when prior information is applied, a local mesh refinement is used during the iterations to refine the mesh where $\absm{\nabla\dsr}$ is large.

For the simulated Dirichlet data, the forward problem is computed on a very fine mesh, and afterwards interpolated onto a different much coarser mesh in order to avoid inverse crimes. White Gaussian noise has been added to the Dirichlet data $\{f_k\}_{k=1}^K$ on the discrete nodes on the boundary of the mesh. The standard deviation of the noise is chosen as $\epsilon \max_{k}\max_{x_j\in\gamD}\absm{f_k(x_j)}$ as in \cite{jin2012}, where the noise level is fixed as $\epsilon = 10^{-2}$ (corresponding to 1\% noise) unless otherwise stated.

\begin{figure}[h]%
\centering
\begin{subfigure}[b]{.33\linewidth}
	\includegraphics[width = \linewidth, trim = 4cm 4cm 1cm 4cm, clip=true]{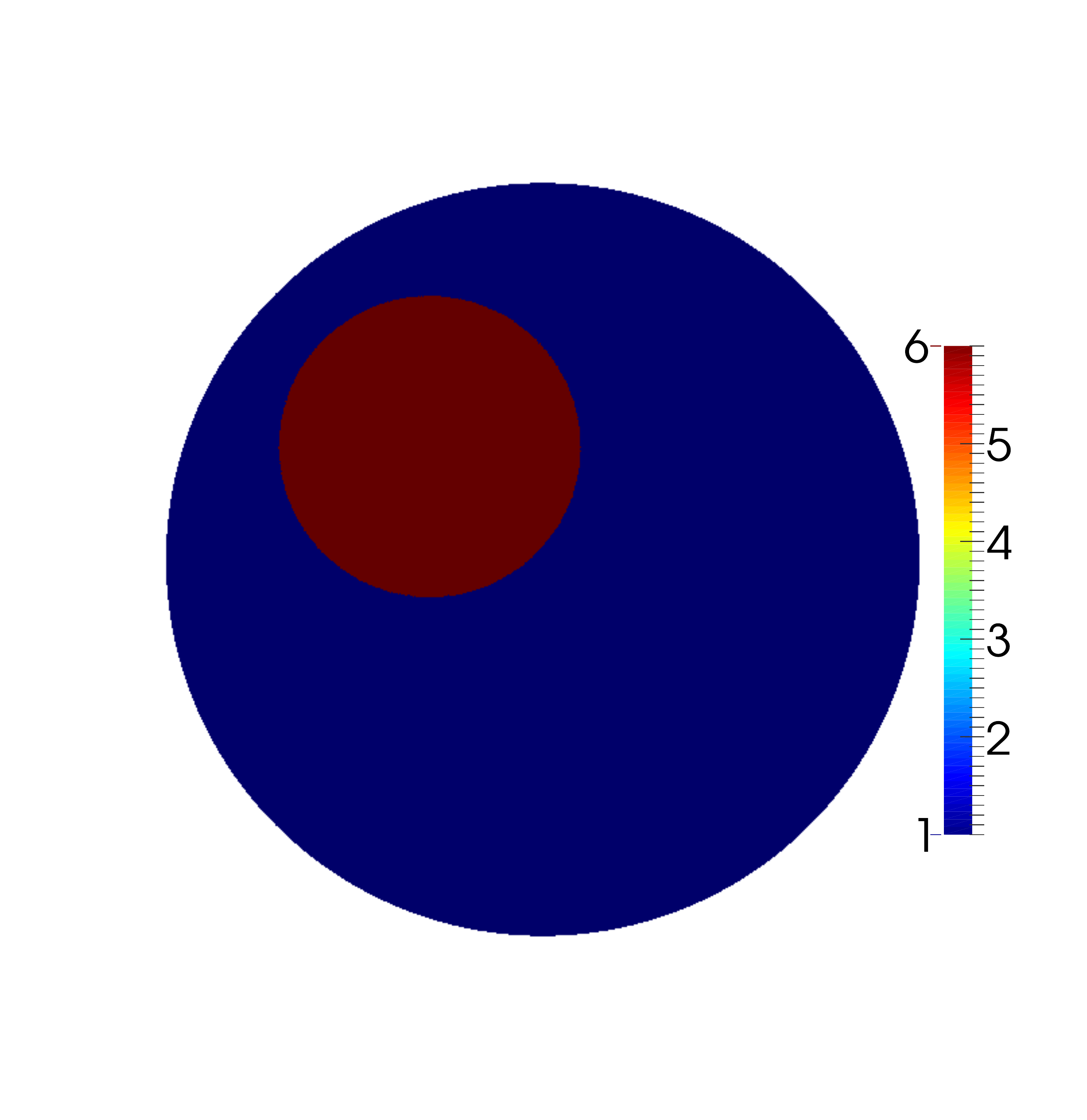}
	\caption{{}} \label{fig:circphantom}
\end{subfigure}%
\hfill
\begin{subfigure}[b]{.33\linewidth}
	\includegraphics[width = \linewidth, trim = 4cm 4cm 1cm 4cm, clip=true]{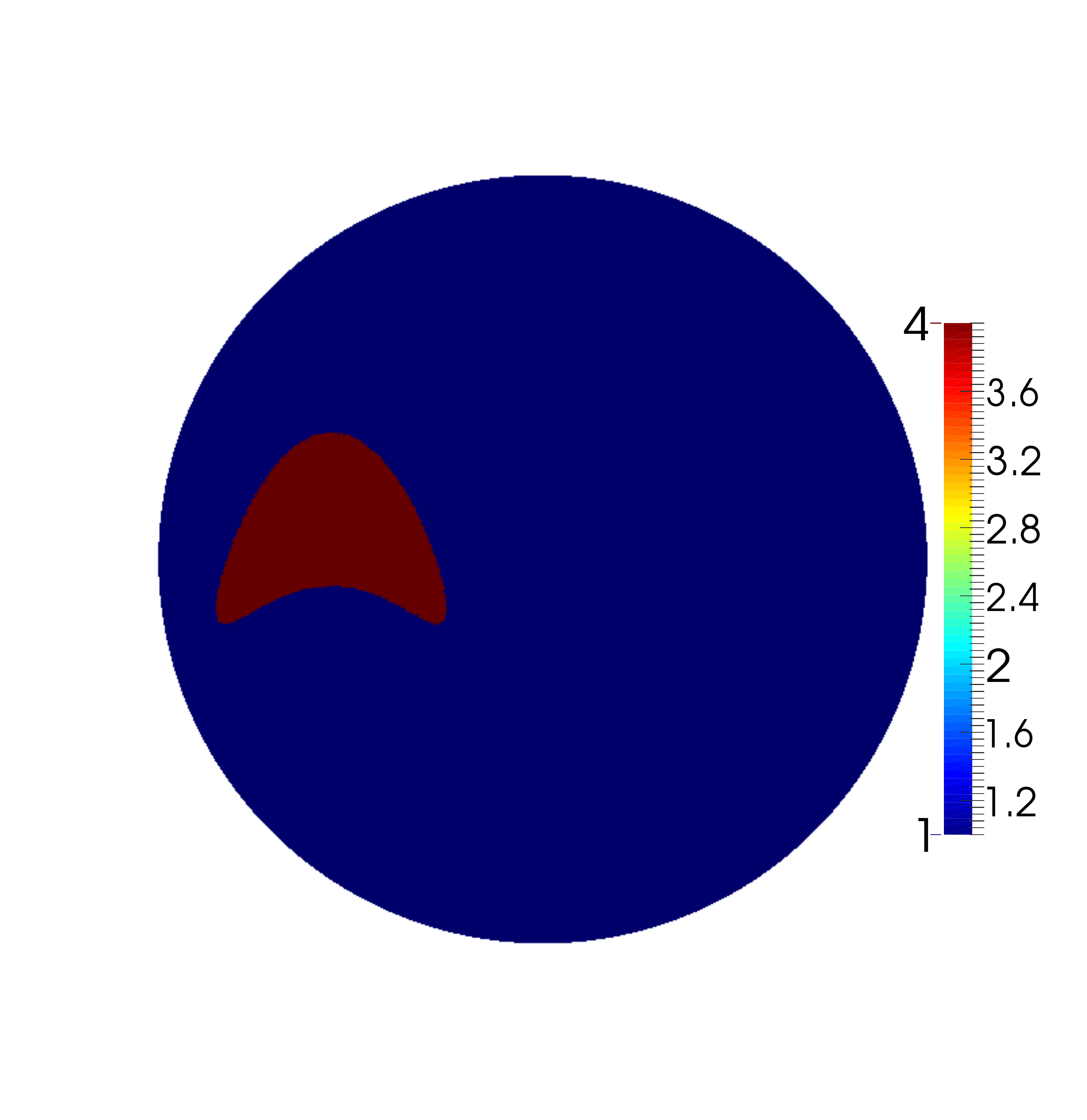}
	\caption{{}} \label{fig:kitephantom}
\end{subfigure}%
\hfill
\begin{subfigure}[b]{.33\linewidth}
	\includegraphics[width = \linewidth, trim = 4cm 4cm 1cm 4cm, clip=true]{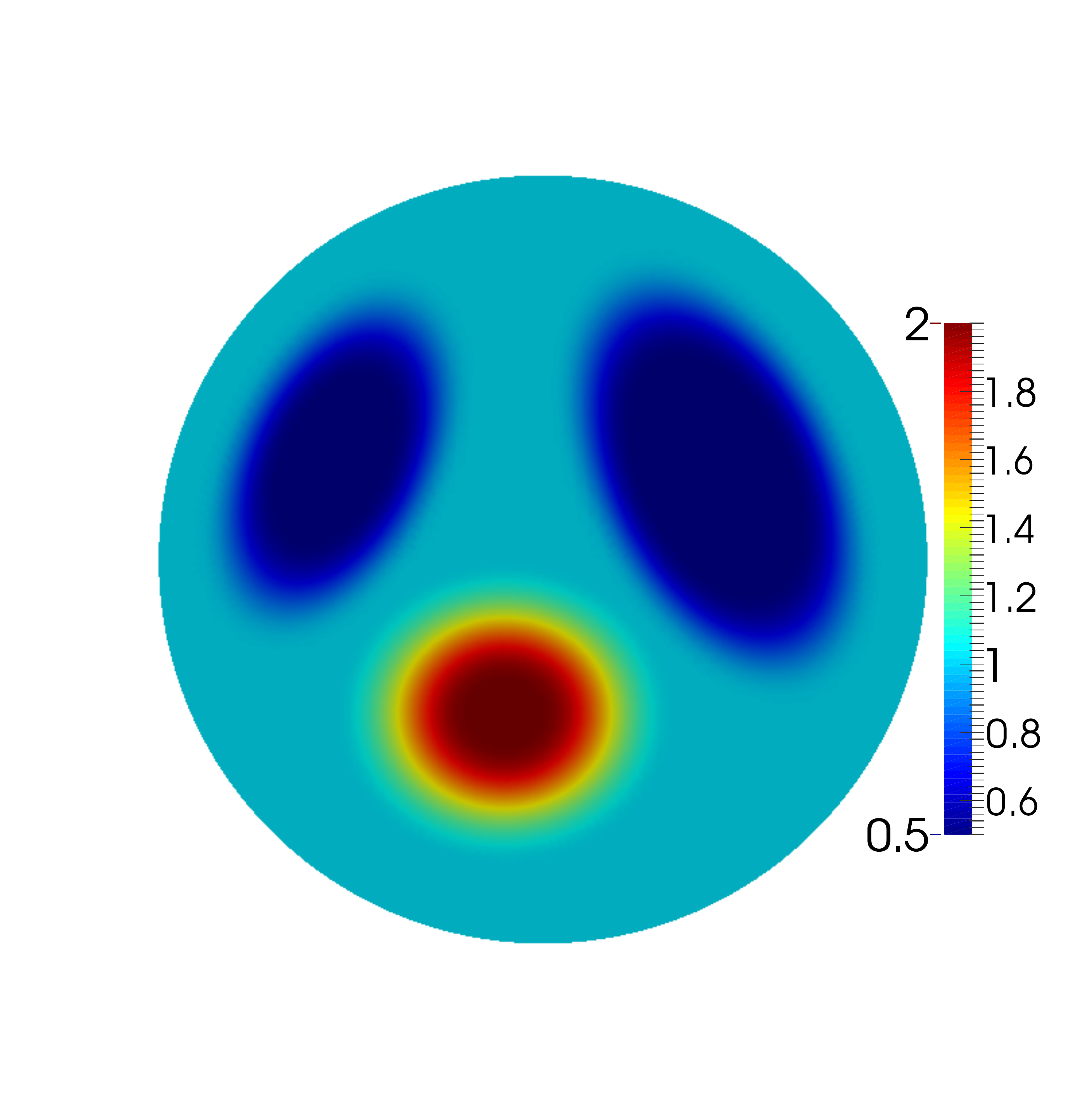}
	\caption{{}} \label{fig:hnlphantom}
\end{subfigure}%
\caption{\textbf{(a):} Circular piecewise constant inclusion. \textbf{(b):} Kite-shaped piecewise constant inclusion. \textbf{(c):} Multiple $C^2$ inclusions.}
\label{fig:phantoms}
\end{figure}

\Fref{fig:phantoms} shows the numerical phantoms: where one is a simple circular inclusion, another is the non-convex kite-shaped phantom. Finally, we also shortly investigate the case of multiple smoother inclusions. 

\subsection{Full Boundary Data}

For $\gamD = \gamN = \po$ it is possible to get quite good reconstructions of both shape and contrast for the convex inclusions as seen in \fref{fig:recon1}, and for the case with multiple inclusions there is a reasonable separation of the inclusions.

\begin{figure}[h]%
\centering
\begin{subfigure}[h]{.33\linewidth}
	\includegraphics[width = \linewidth, trim = 4cm 4cm 1cm 4cm, clip=true]{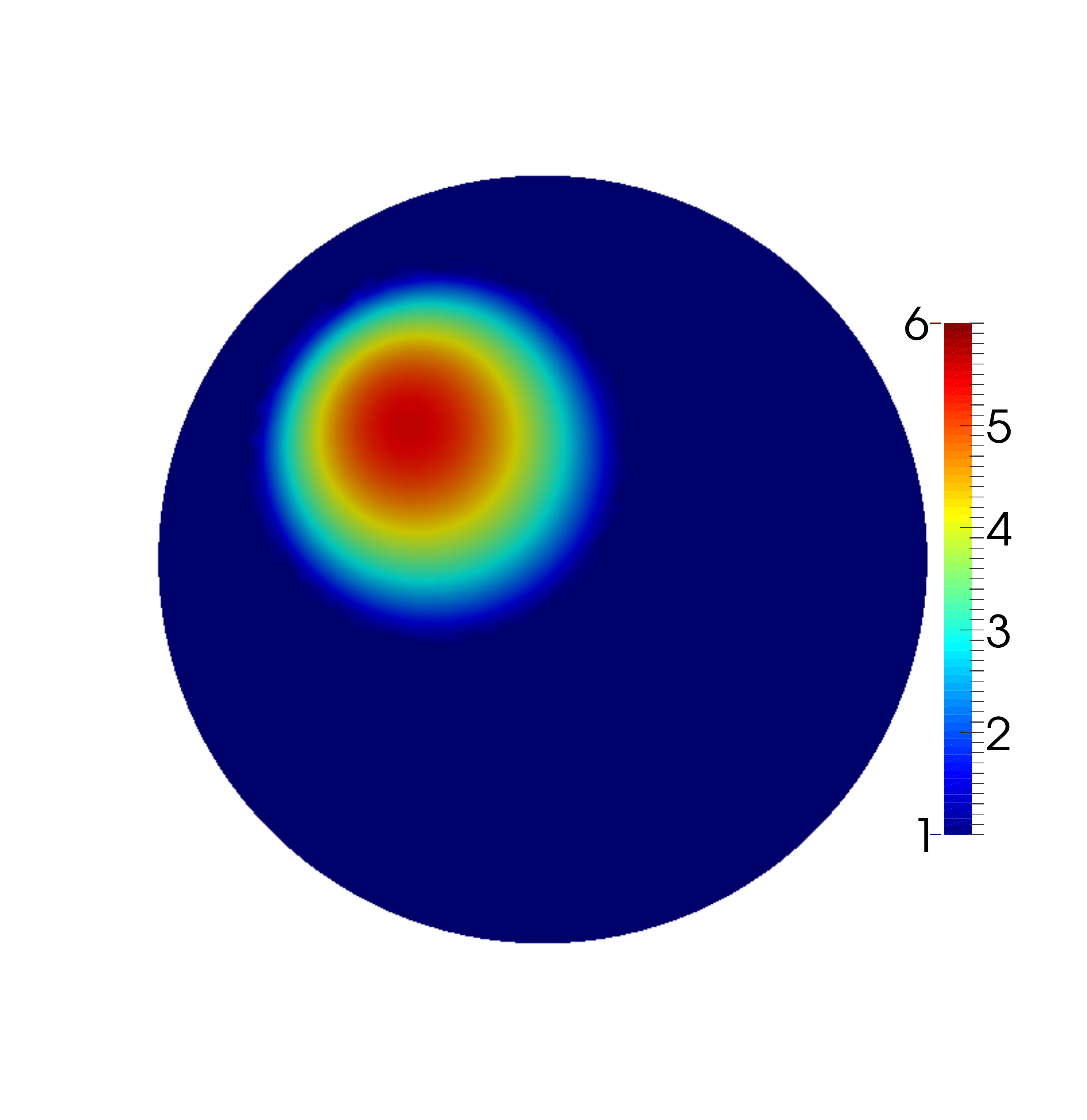}
	\caption{$\alpha = 10^{-3}$}
\end{subfigure}%
\hfill
\begin{subfigure}[h]{.33\linewidth}
	\includegraphics[width = \linewidth, trim = 4cm 4cm 1cm 4cm, clip=true]{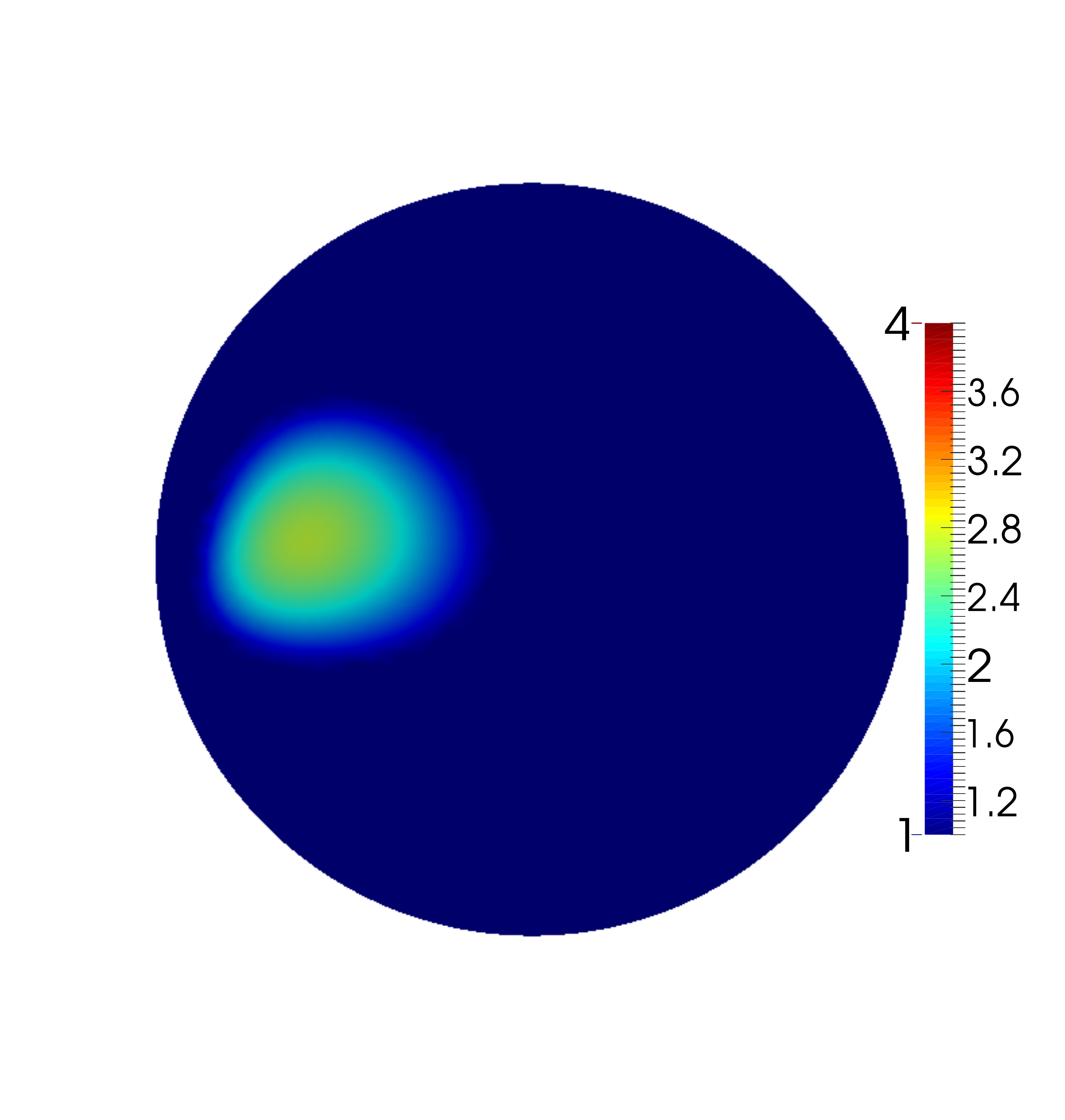}
	\caption{$\alpha = 5\cdot 10^{-4}$}
\end{subfigure}%
\hfill
\begin{subfigure}[h]{.33\linewidth}
	\includegraphics[width = \linewidth, trim = 4cm 4cm 1cm 4cm, clip=true]{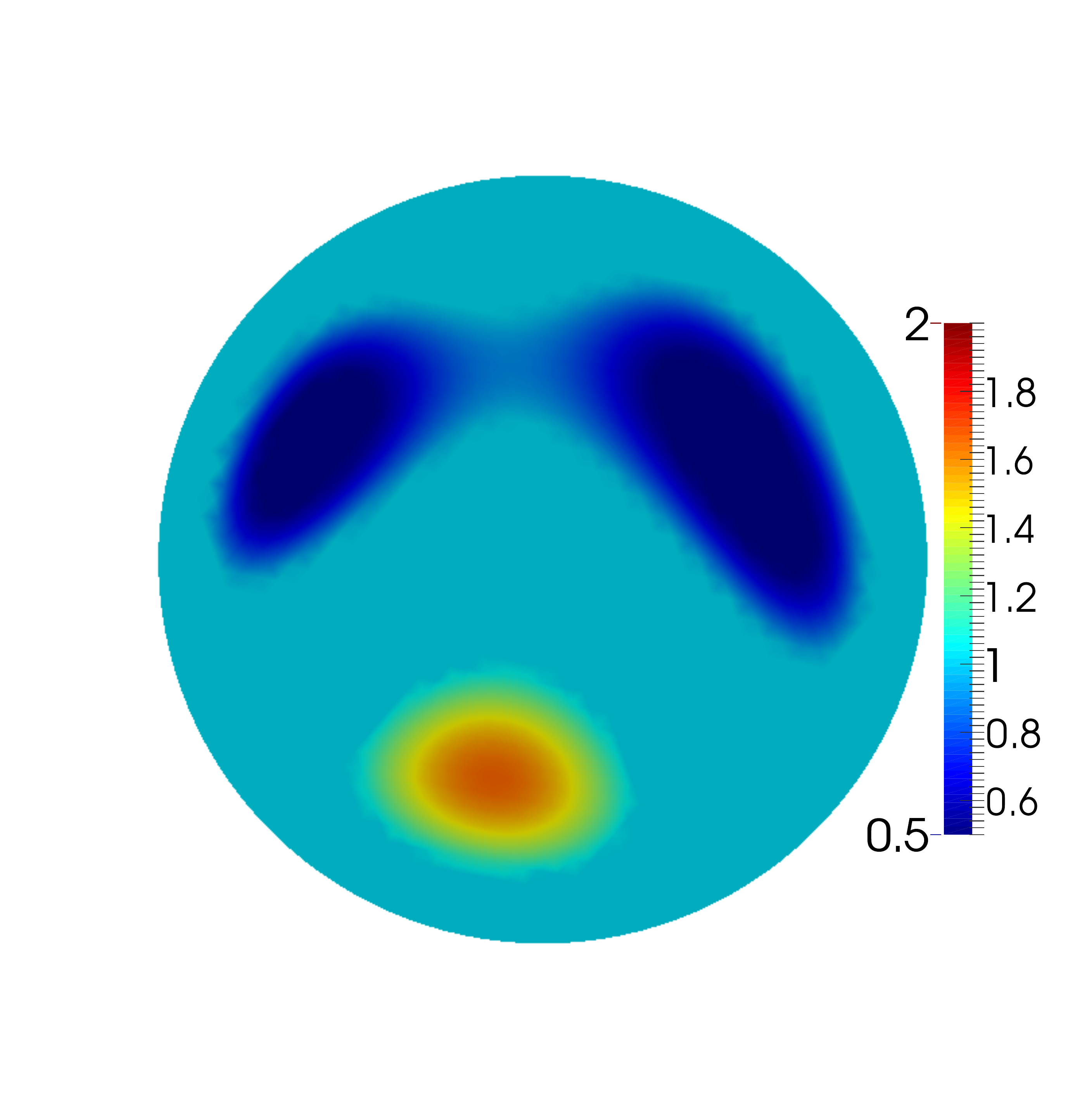}
	\caption{$\alpha = 6.5\cdot 10^{-4}$}
\end{subfigure}%
\caption{Sparse reconstruction of the phantoms in \fref{fig:phantoms}.}
\label{fig:recon1}
\end{figure}

For the kite-shaped phantom we only get what seems like a convex approximation of the shape. It is seen in \cite{jin2012} that the algorithm is able to reconstruct some types of non-convex inclusions such as the hole in a ring-shaped phantom, however those inclusions are much larger which makes it easier to distinguish from similar convex inclusions. 

We note that the method is very stable towards noise. In \fref{fig:recon2} it is shown how unreasonable amounts of noise only leads to small deformations in the shape of the reconstructed inclusion.

\begin{figure}[h]%
\centering
\begin{subfigure}[b]{.4\linewidth}
	\includegraphics[width = \linewidth]{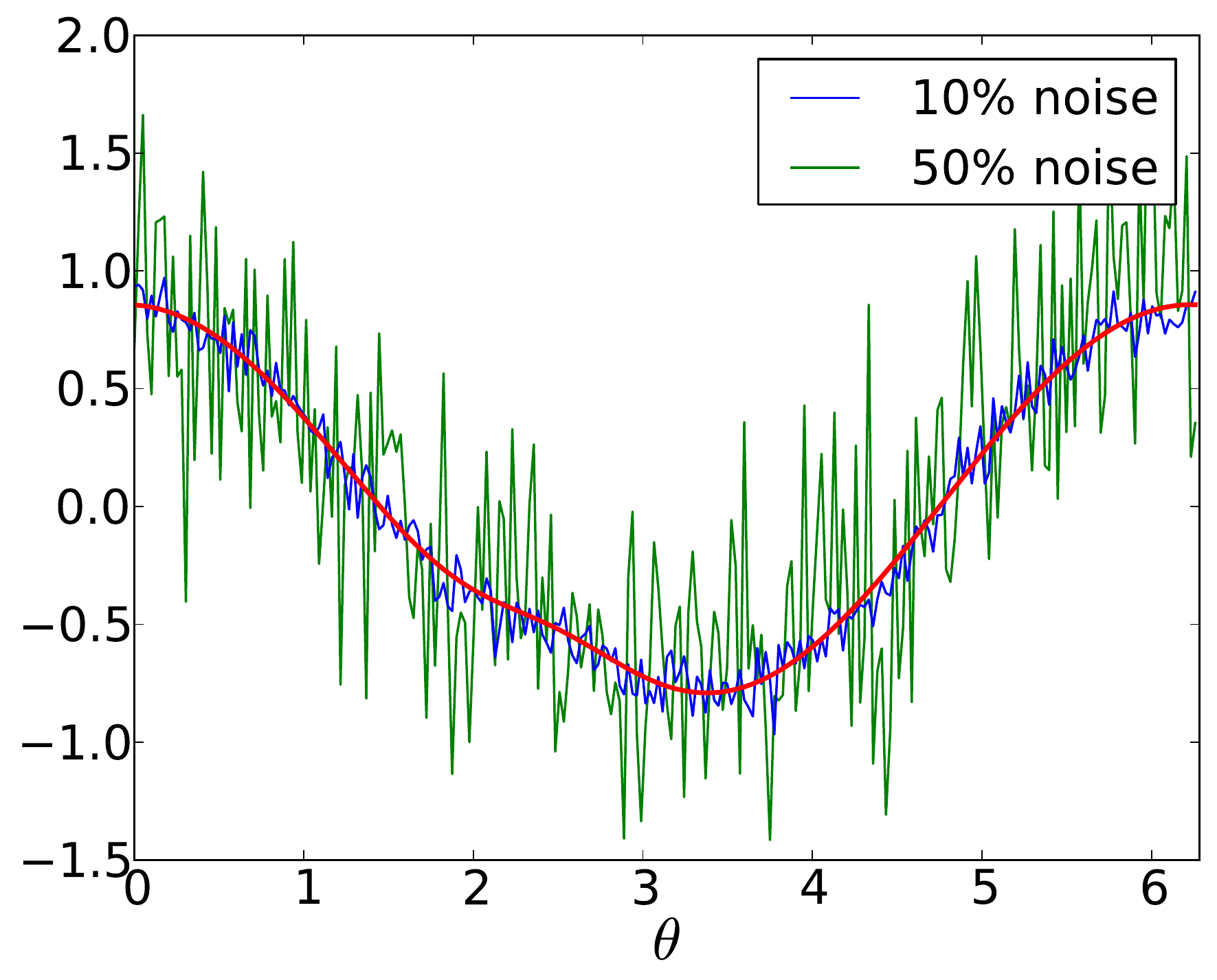}
\end{subfigure}%
\hfill
\begin{subfigure}[b]{.3\linewidth}
	\includegraphics[width = \linewidth, trim = 4cm 4cm 1cm 4cm, clip=true]{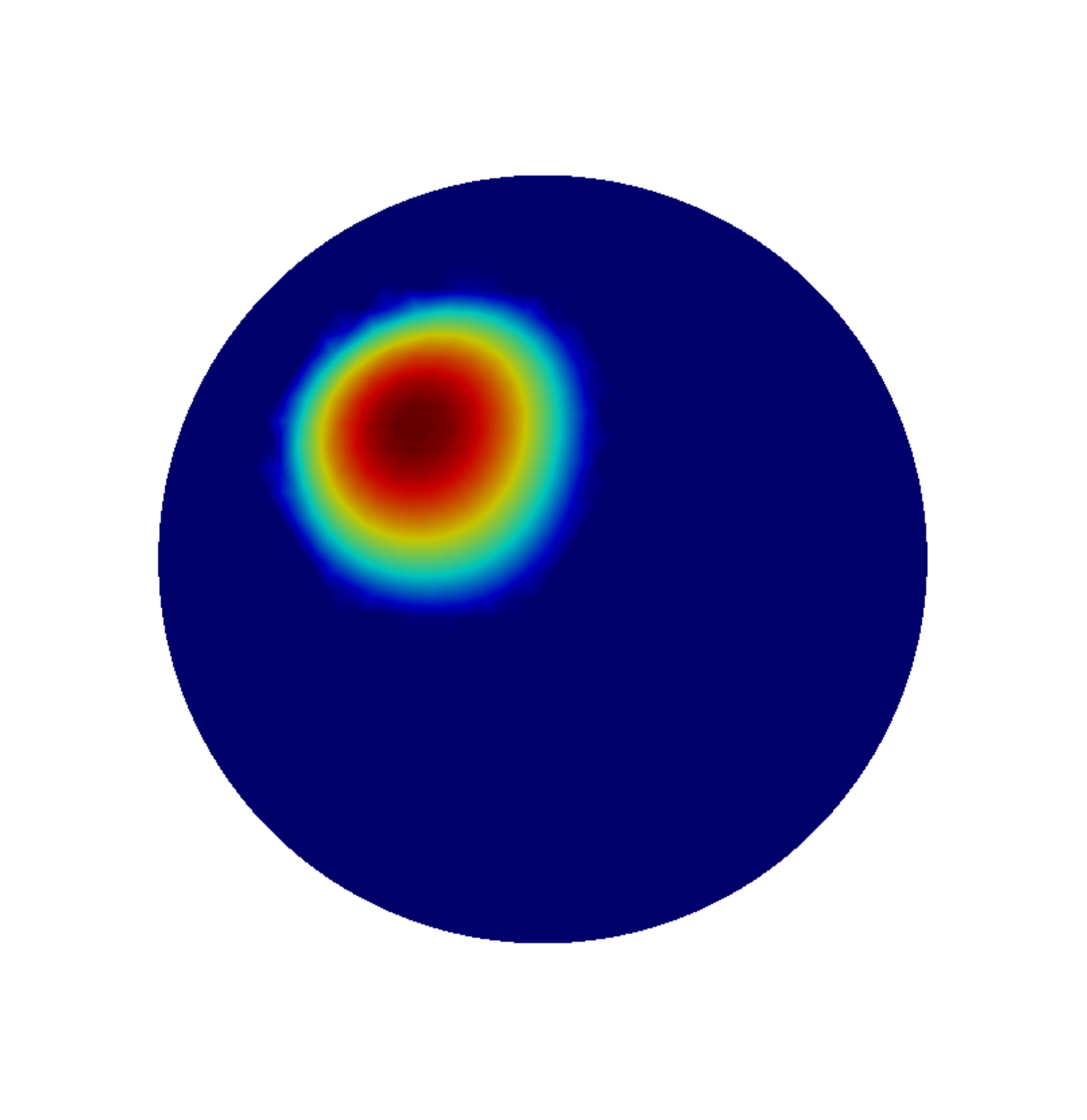}
	\caption*{10\% noise}
\end{subfigure}%
\hfill
\begin{subfigure}[b]{.3\linewidth}
	\includegraphics[width = \linewidth, trim = 4cm 4cm 1cm 4cm, clip=true]{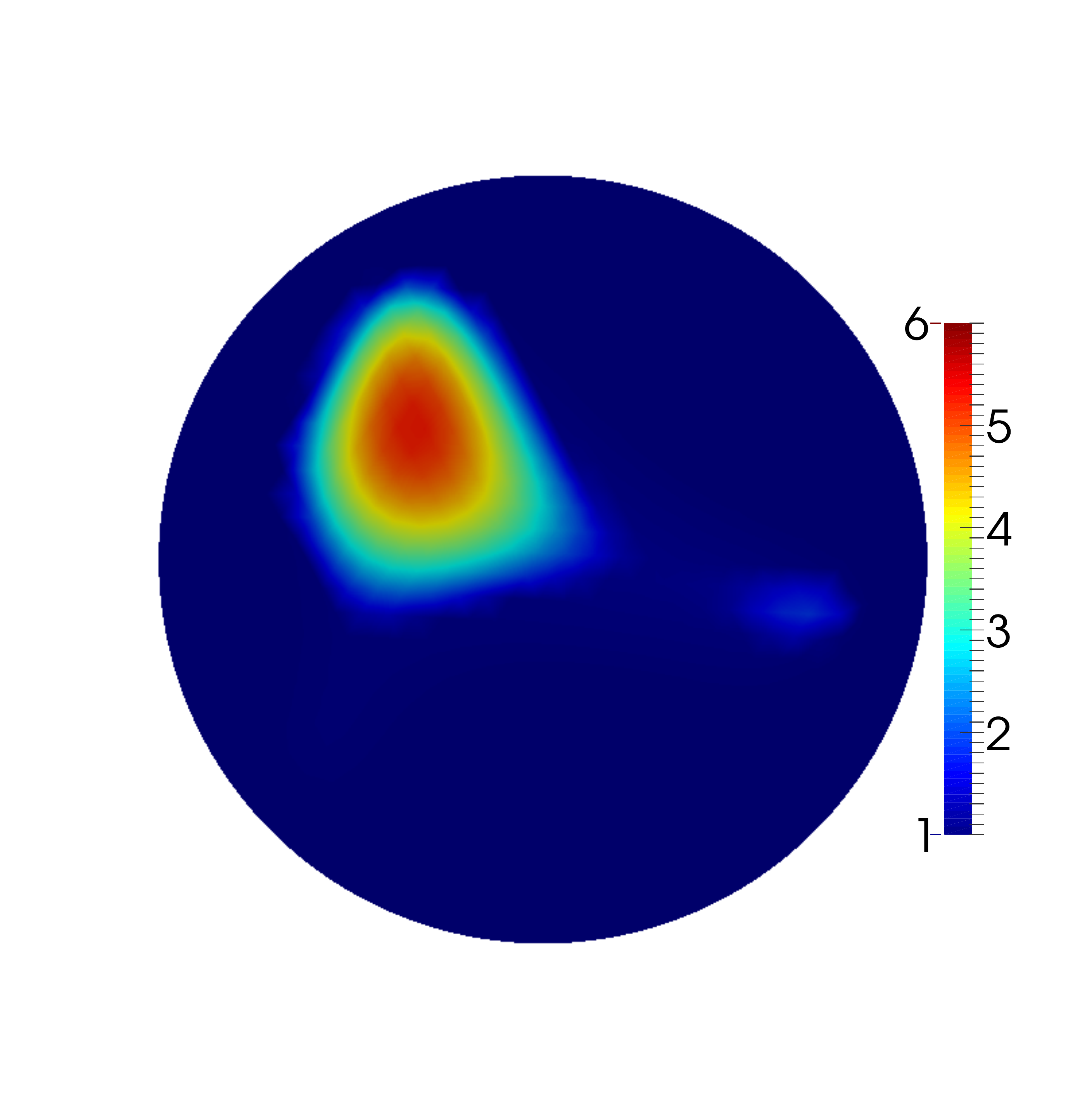}
	\caption*{50\% noise}
\end{subfigure}%
\caption{\textbf{Left:} Dirichlet data corresponding to $g = \cos(\theta)$ for the phantom in \fref{fig:circphantom}, with 10\% and 50\% noise level. \textbf{Middle:} reconstruction for 10\% noise level. \textbf{Right:} reconstruction for 50\% noise level.}
\label{fig:recon2}
\end{figure}

In order to investigate the use of prior information we consider the phantom in \fref{fig:circphantom}, and let $B(r)$ denote a ball centered at the correct inclusion and with radius $r$. Now we can investigate reconstructions with prior information assuming that the support of $\dsr$ is $\overline{B((1+\delta r)r^*)}$ for $r^*$ being the correct radius of the inclusion. \Fref{fig:r-dep} shows that underestimating the support of the inclusion $\delta r < 0$ is heavily enforced, and the contrast is vastly overestimated in the reconstruction as shown in \fref{fig:meanmax} (note that this can not be seen in \fref{fig:r-dep} as the color scale for the phantom is applied). 

\begin{figure}[h]%
\centering
\begin{subfigure}[b]{.33\linewidth}
	\includegraphics[width = \linewidth, trim = 4cm 4cm 1cm 4cm, clip=true]{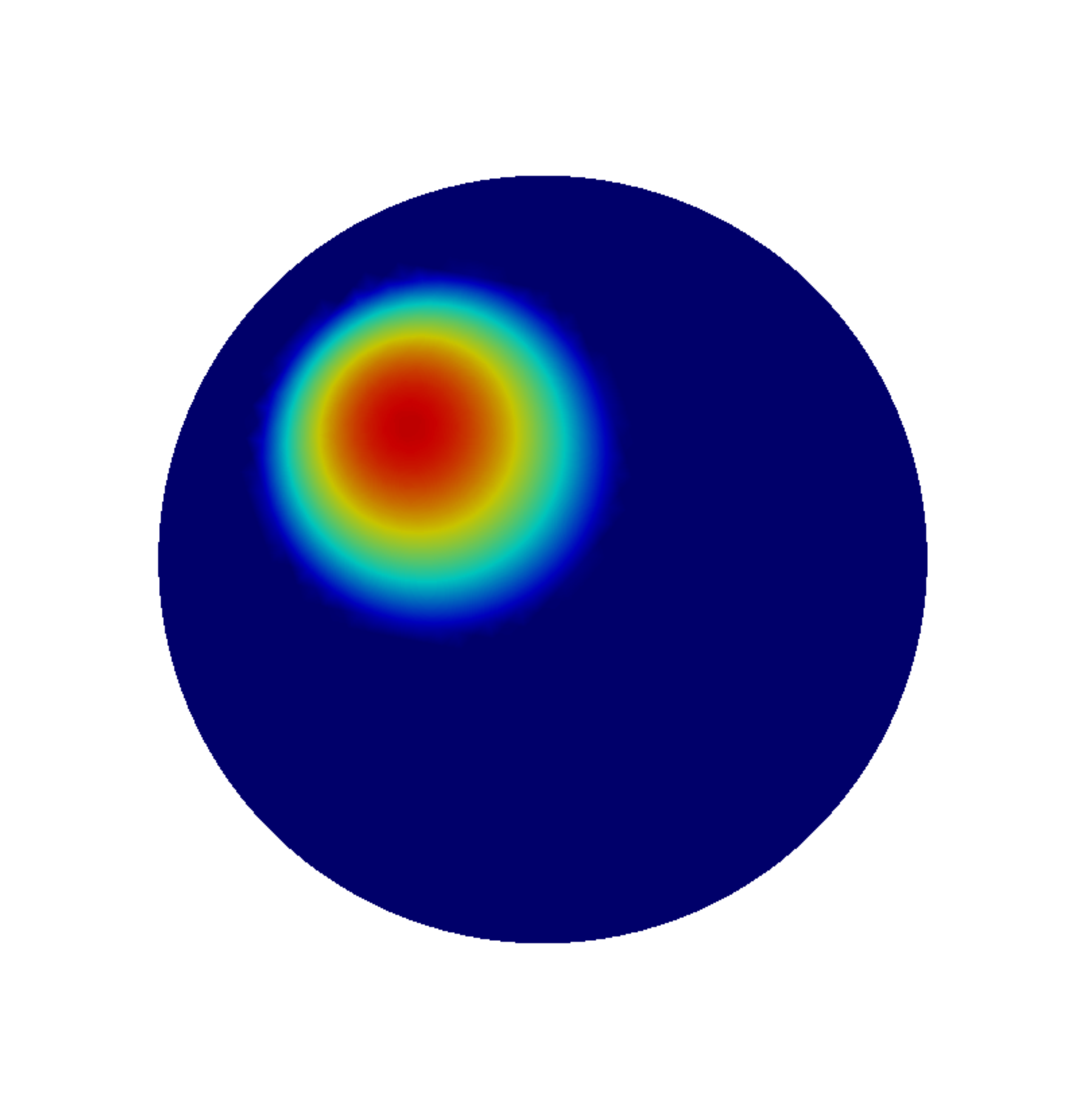}
	\caption*{No prior}
\end{subfigure}%
\hfill
\begin{subfigure}[b]{.33\linewidth}
	\includegraphics[width = \linewidth, trim = 4cm 4cm 1cm 4cm, clip=true]{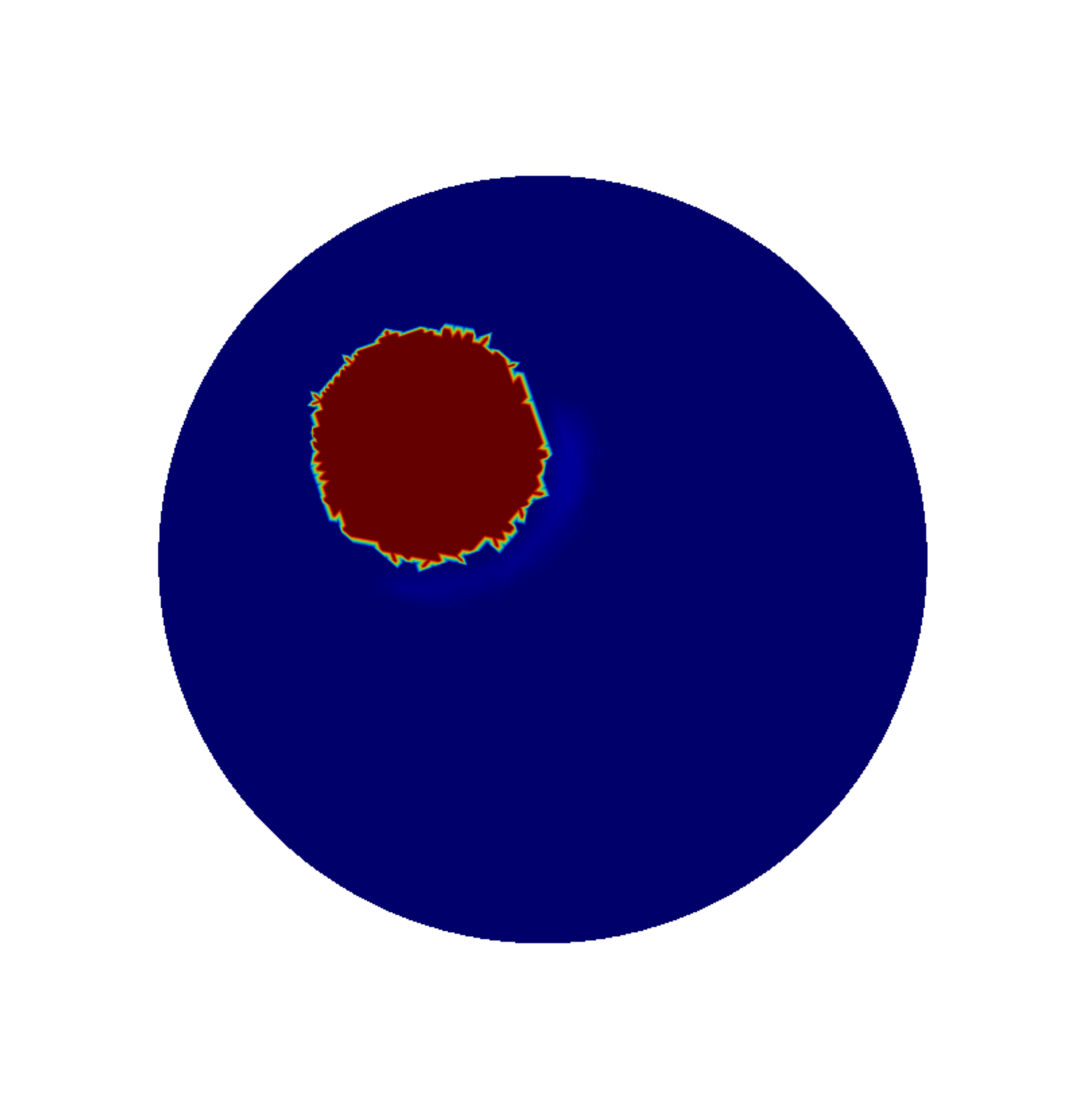}
	\caption*{$\delta r = -0.25$}
\end{subfigure}%
\hfill
\begin{subfigure}[b]{.33\linewidth}
	\includegraphics[width = \linewidth, trim = 4cm 4cm 1cm 4cm, clip=true]{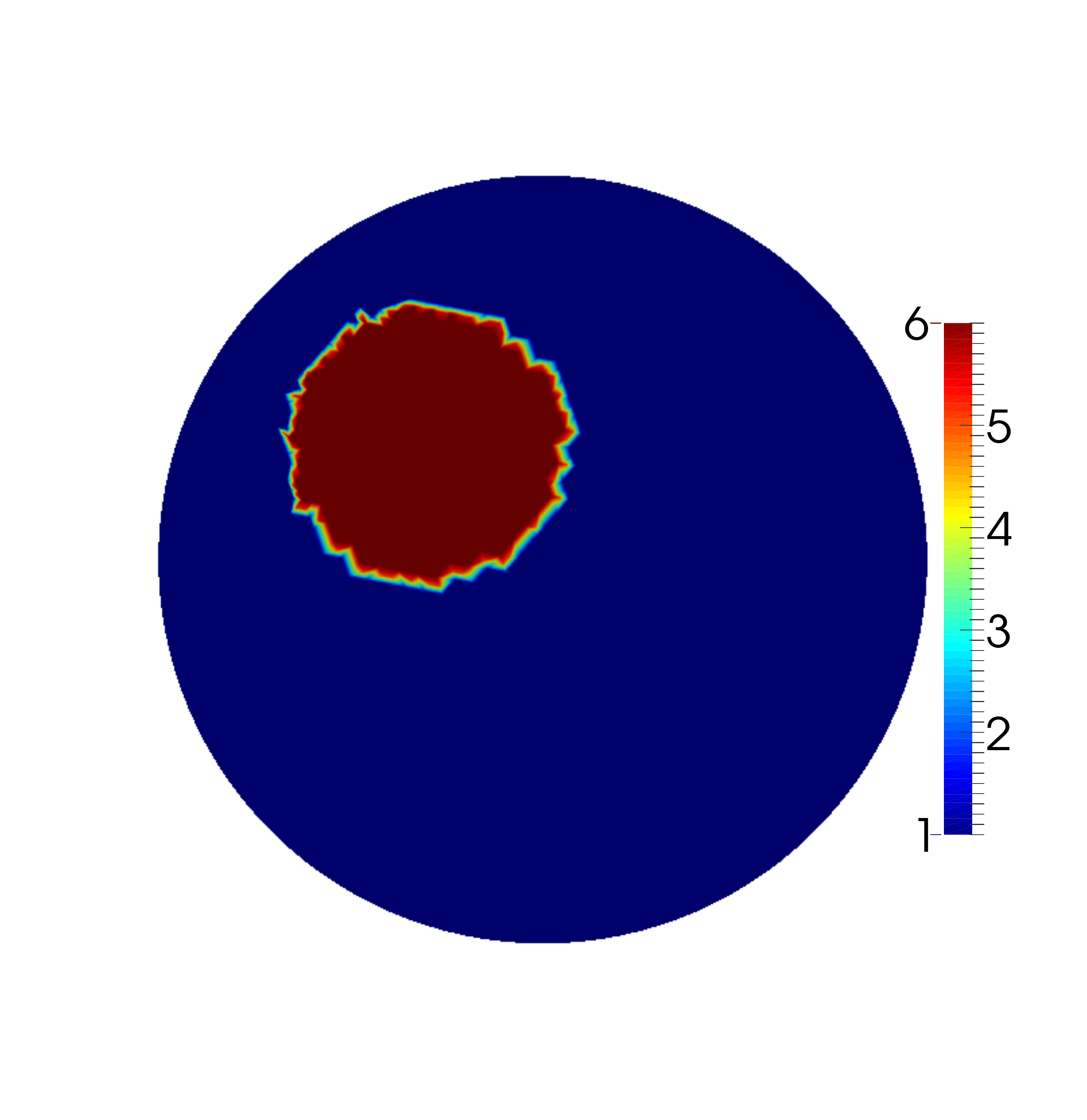}
	\caption*{$\delta r = -0.10$}
\end{subfigure}%
\\[5mm]
\begin{subfigure}[b]{.33\linewidth}
	\includegraphics[width = \linewidth, trim = 4cm 4cm 1cm 4cm, clip=true]{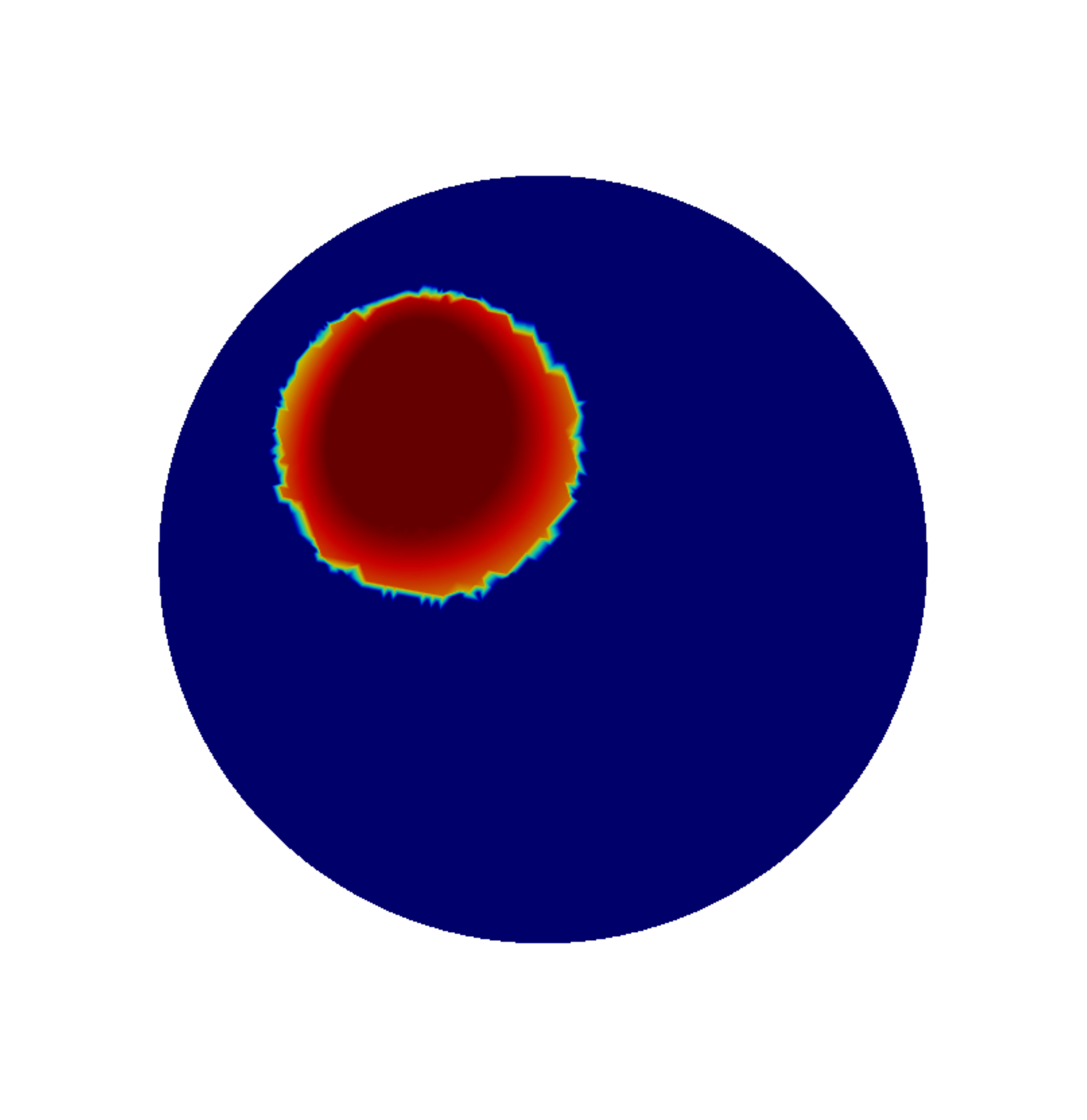}
	\caption*{$\delta r = 0$}
\end{subfigure}%
\hfill
\begin{subfigure}[b]{.33\linewidth}
	\includegraphics[width = \linewidth, trim = 4cm 4cm 1cm 4cm, clip=true]{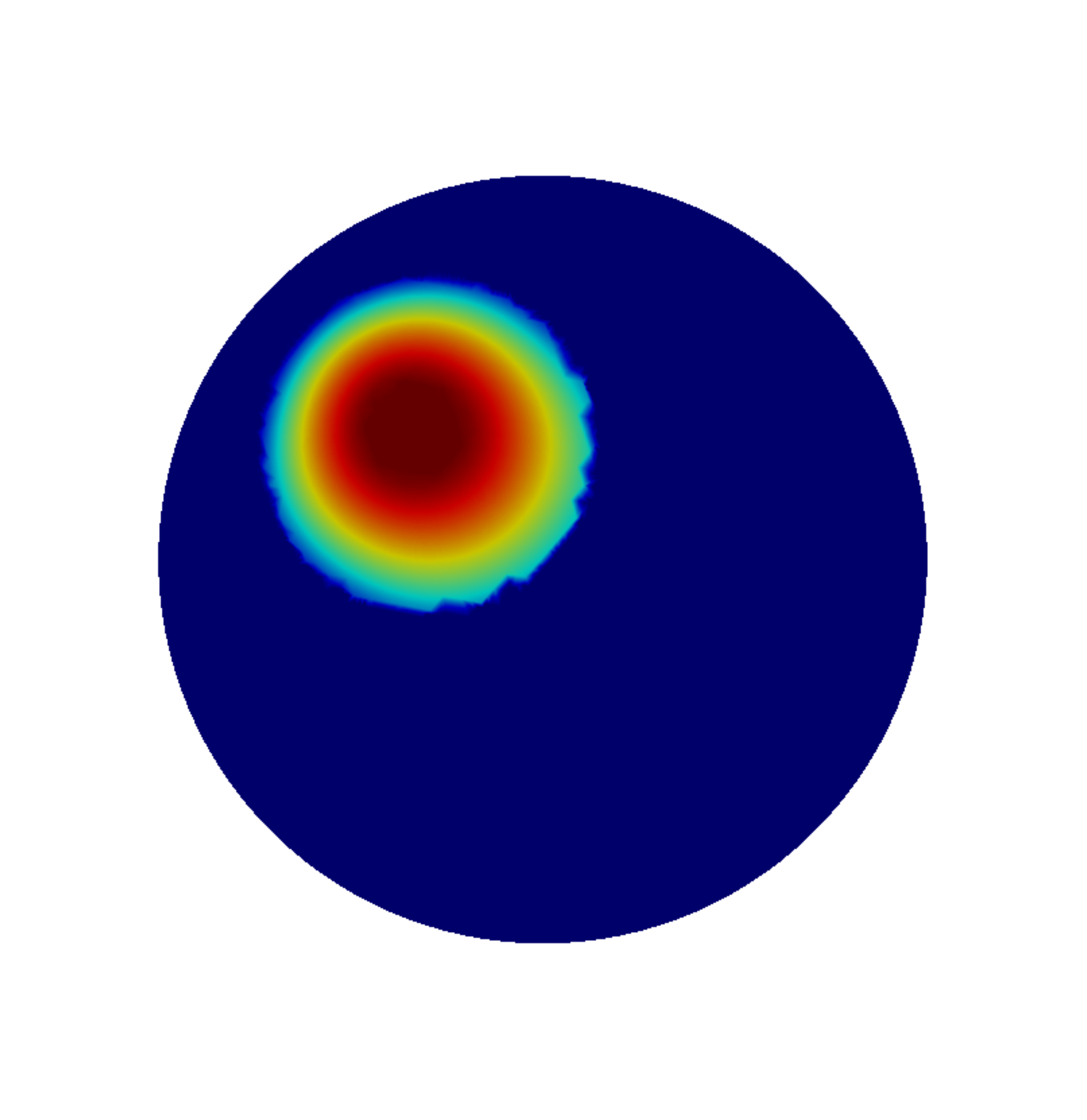}
	\caption*{$\delta r = 0.10$}
\end{subfigure}%
\hfill
\begin{subfigure}[b]{.33\linewidth}
	\includegraphics[width = \linewidth, trim = 4cm 4cm 1cm 4cm, clip=true]{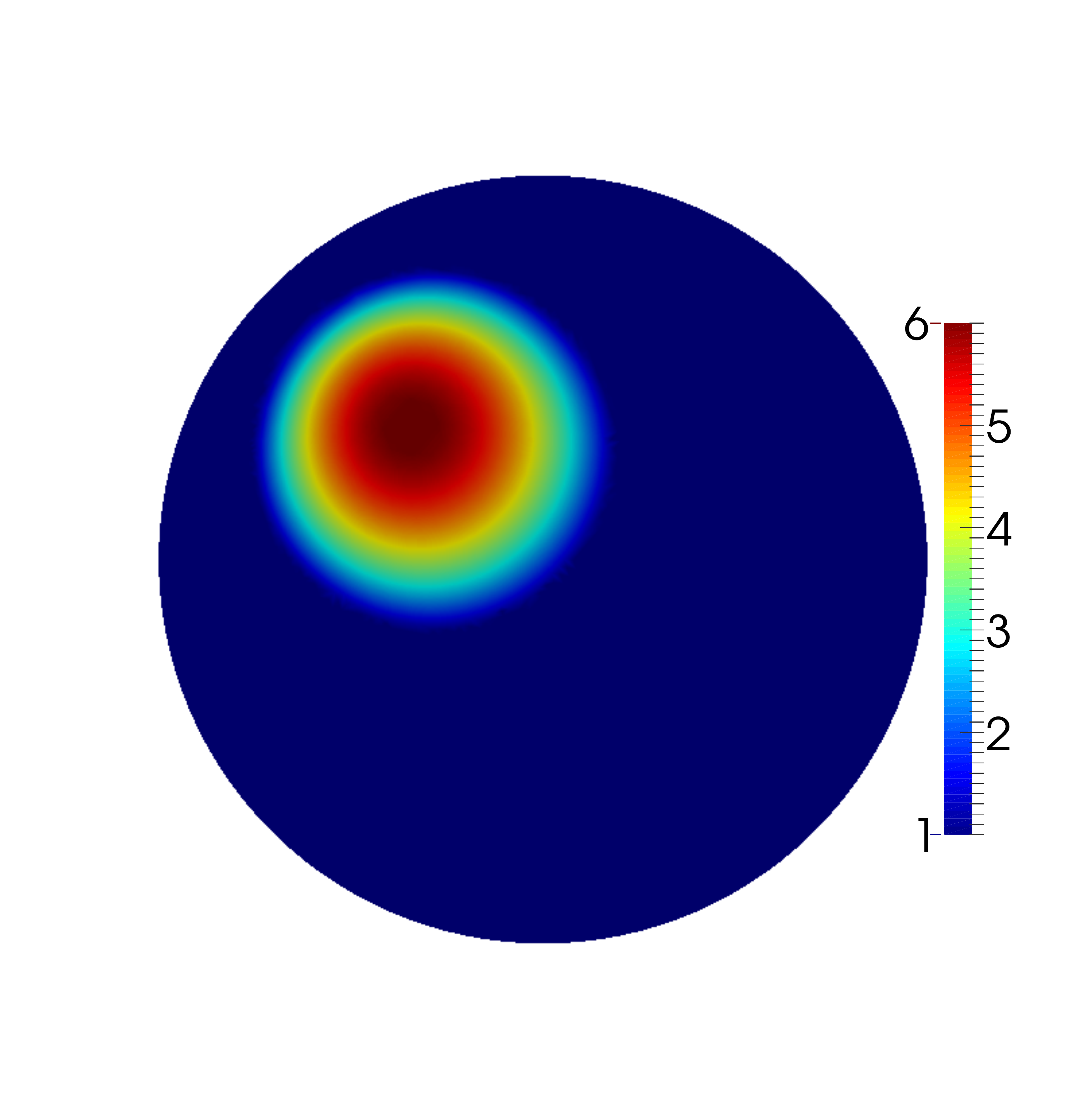}
	\caption*{$\delta r = 0.25$}
\end{subfigure}%
\caption{Sparse reconstruction of the phantom in \fref{fig:circphantom} for varying $\delta r$. The colorbar is truncated at $[1,6]$.}
\label{fig:r-dep}
\end{figure}

Interestingly, when overestimating the support, the contrast and support of the reconstructed inclusion does not suffer particularly. Intuitively, this corresponds to increasing $\delta r$ such that the assumed support of $\dsr$ contains the entire domain $\Omega$, which corresponds to the case with no prior information. For a subset $E\subseteq\Omega$ denote by $\sigma_E \equiv \absm{E}^{-1}\int_E \sigma dx$ the average of $\sigma$ on $E$, and denote by $\sigma_{\max} \equiv \max_{j}\absm{\sigma(x_j)}$ the maximum of $\sigma$ on the mesh nodes. Then \fref{fig:meanmax} gives a good indication of the aforementioned intuition, where around $\delta r = 0$ both $\sigma_B$ and $\sigma_{\max}$ levels off around the correct contrast of the inclusion (the red line) and stays there for $\delta r > 0$. It should be noted that even a 25\% overestimation of the support leads to a better contrast in the reconstruction than if no prior information was applied, as seen in \fref{fig:r-dep}.

Having an overestimation of the support for $\dsr$ also seems to be a reasonable assumption. Definitely there is the case of no prior information which means that $\supp \dsr$ is assumed to be $\Omega$. If the estimation comes from another method such as total variation regularization, then the support is typically slightly overestimated while the contrast suffers \cite{Borsic}. Thus we can use the overestimated support to get a good localisation and contrast reconstruction simultaneously.

\begin{figure}[h]%
\centering
\includegraphics[width = 0.6 \linewidth]{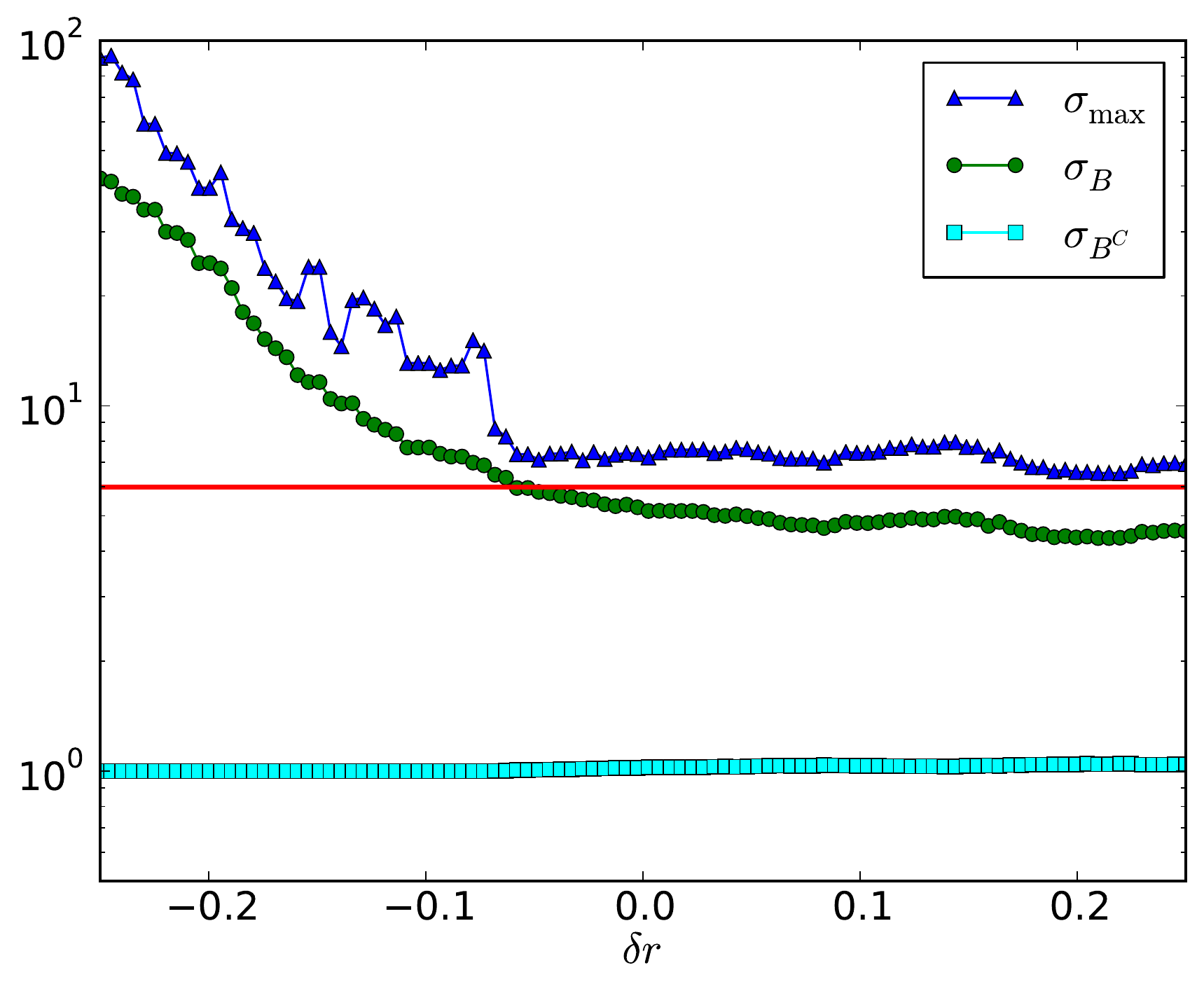}
\caption{Behaviour of sparsity reconstruction based on the phantom in \fref{fig:circphantom} for varying $\delta r$.}
\label{fig:meanmax}
\end{figure}

\Fref{fig:kitepriorrecon} shows how the reconstruction of the kite-shaped phantom can be vastly improved. Note that not only is $\supp \dsr$ better approximated, but the contrast is also highly improved. It is not surprising that we can achieve an almost perfect reconstruction if $\supp \dsr$ is exactly known, however it is a good benchmark to compare the cases for the overestimated support as it shows how well the method can possibly do. 

\begin{figure}[h]%
\centering
\begin{subfigure}[b]{.33\linewidth}
	\includegraphics[width = \linewidth, trim = 4cm 4cm 1cm 4cm, clip=true]{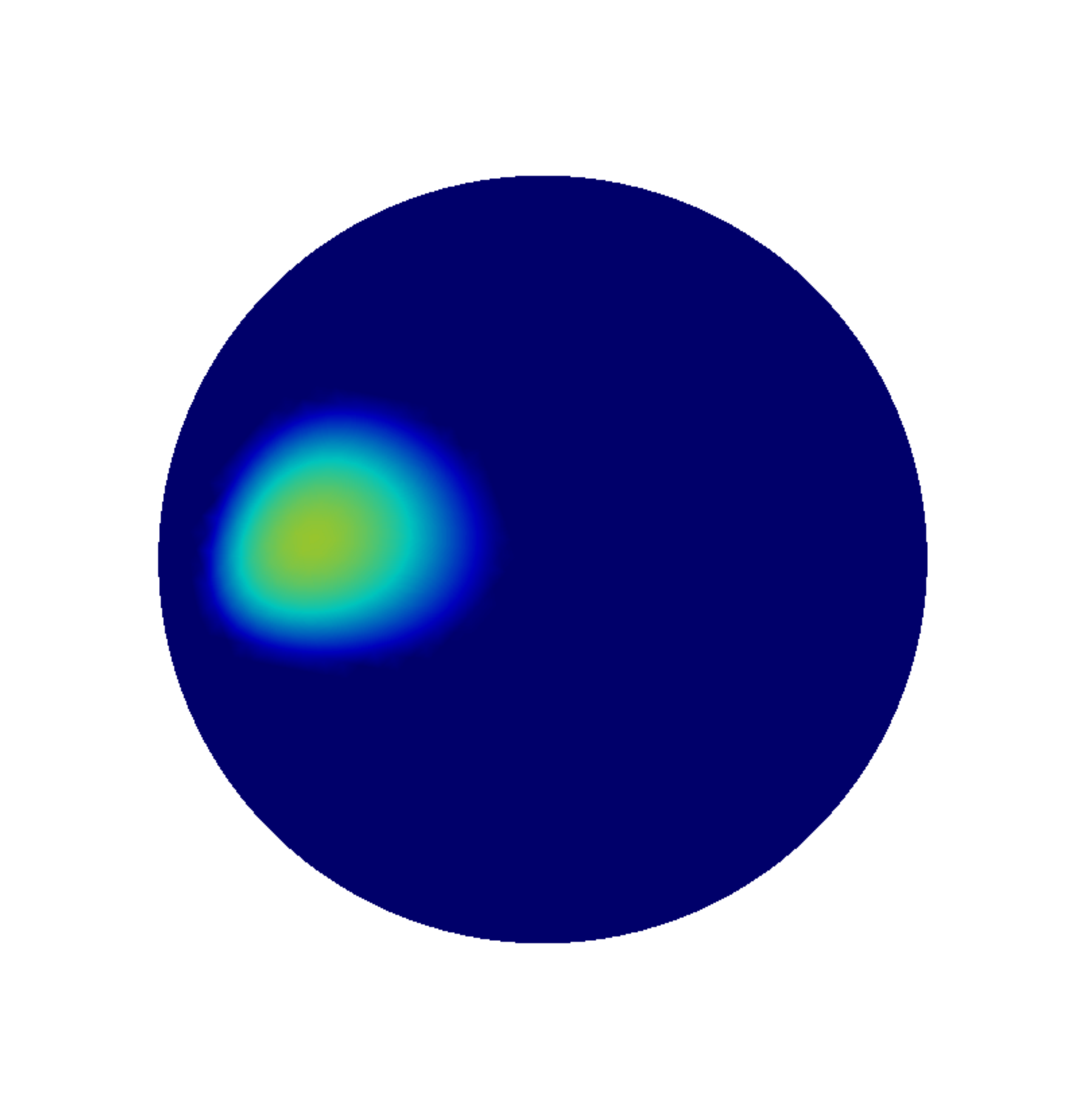}
	\caption*{No prior}
\end{subfigure}%
\hfill
\begin{subfigure}[b]{.33\linewidth}
	\includegraphics[width = \linewidth, trim = 4cm 4cm 1cm 4cm, clip=true]{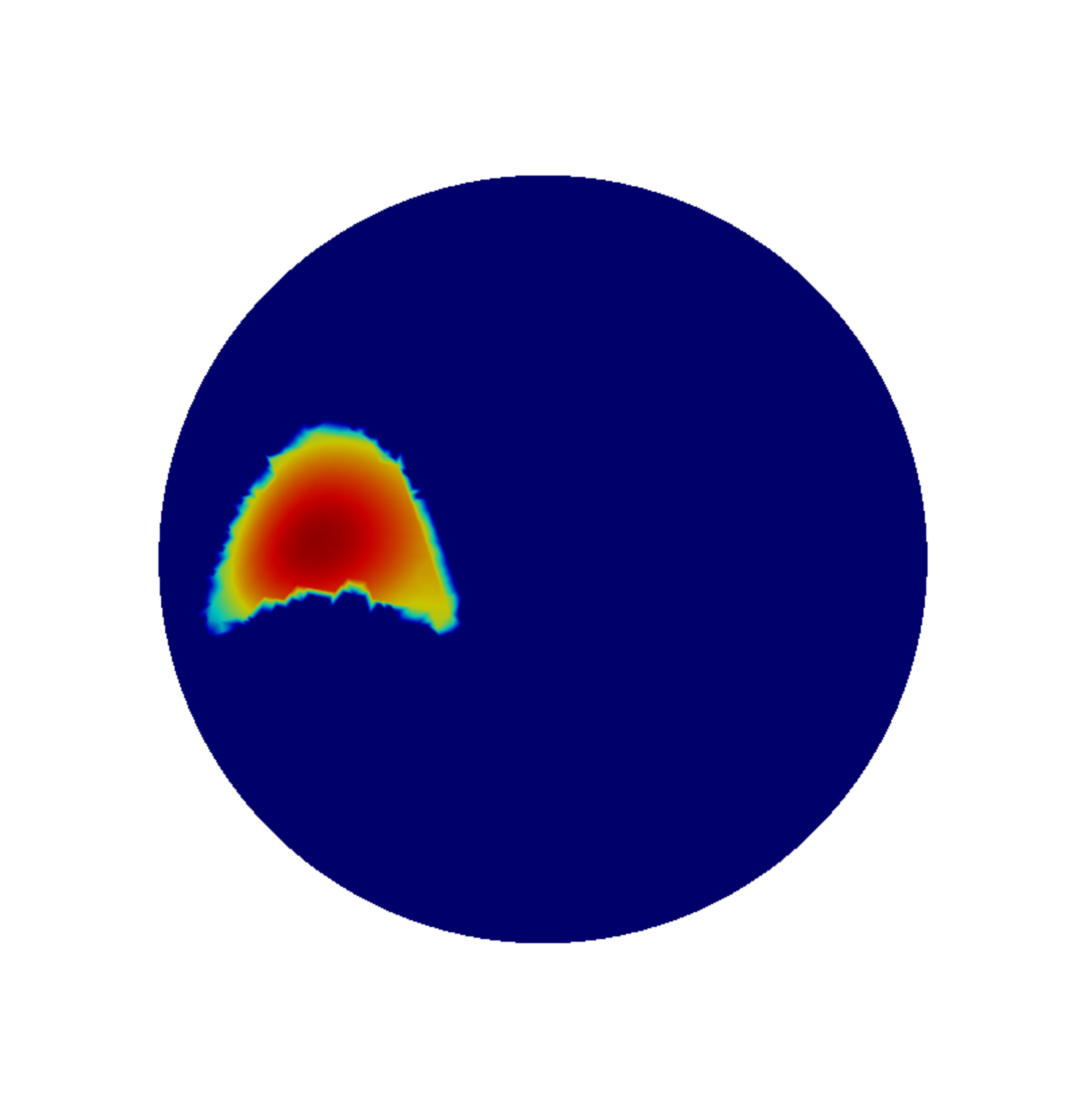}
	\caption*{10\% overestimated support}
\end{subfigure}%
\hfill
\begin{subfigure}[b]{.33\linewidth}
	\includegraphics[width = \linewidth, trim = 4cm 4cm 1cm 4cm, clip=true]{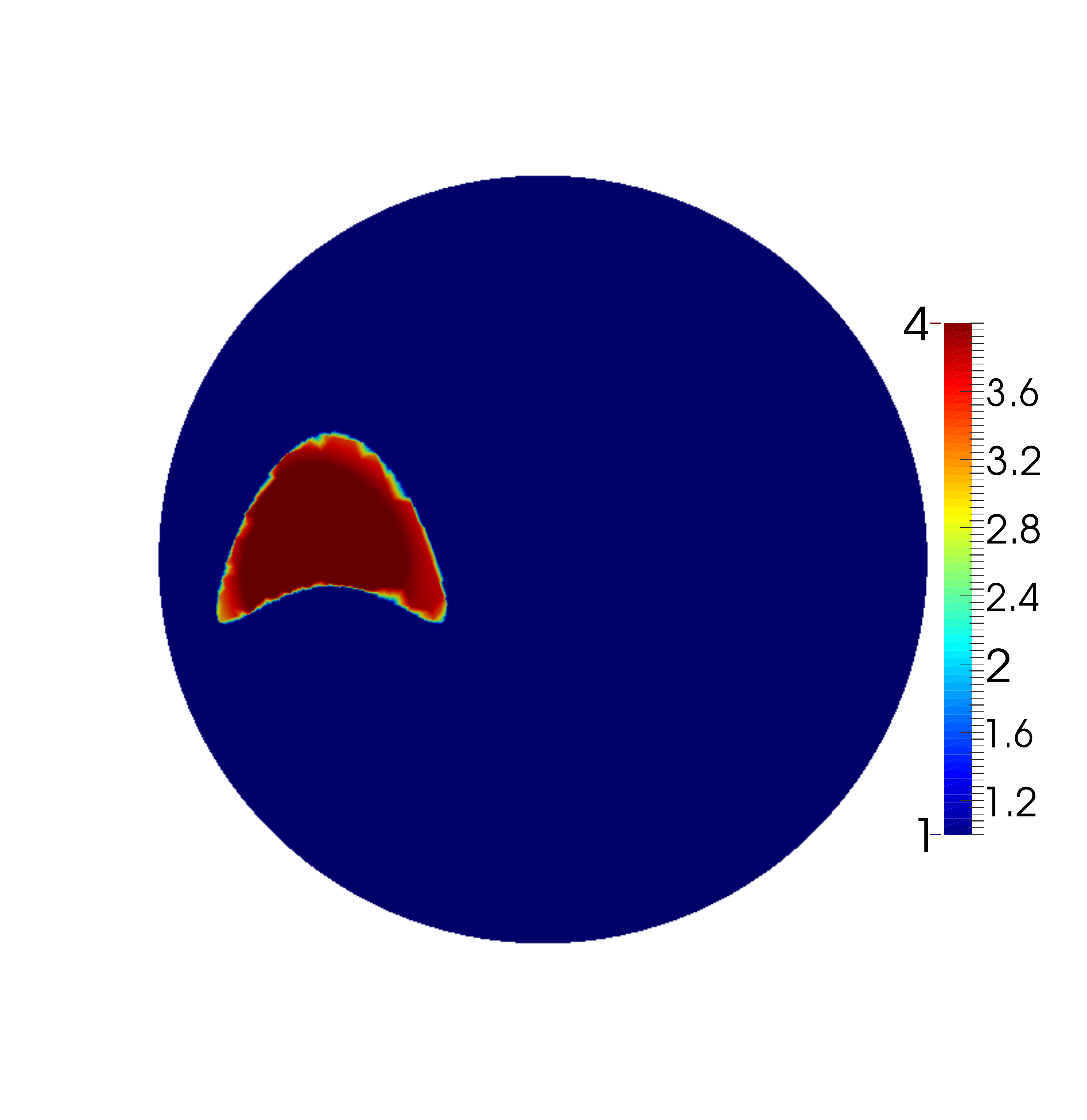}
	\caption*{Exact support}
\end{subfigure}%
\caption{Sparse reconstruction of the phantom in \fref{fig:kitephantom}.}
\label{fig:kitepriorrecon}
\end{figure}

\subsection{Partial Boundary Data}

For the partial data problem we choose $\Gamma = \gamD = \gamN = \{\theta\in(\theta_1,\theta_2)\}$ for $0\leq \theta_1 < \theta_2 \leq 2\pi$.

\begin{figure}[h]%
\centering
\begin{subfigure}[b]{.33\linewidth}
	\includegraphics[width = \linewidth, trim = 3.6cm 4cm 0cm 4cm, clip=true]{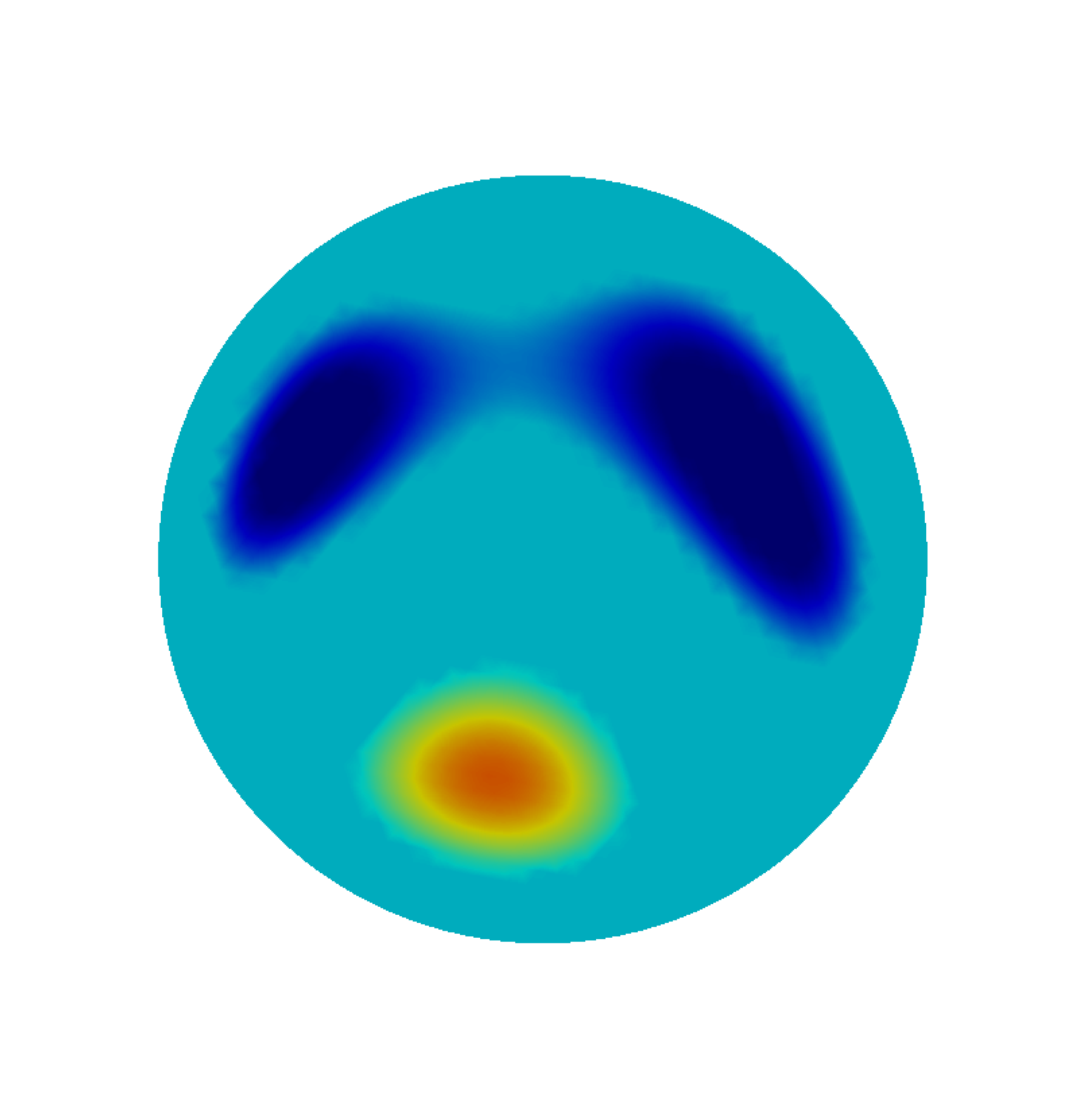}
	\caption*{Full data}
\end{subfigure}%
\hfill
\begin{subfigure}[b]{.33\linewidth}
\includegraphics[width = \linewidth, trim = 3.6cm 4cm 0cm 4cm, clip=true]{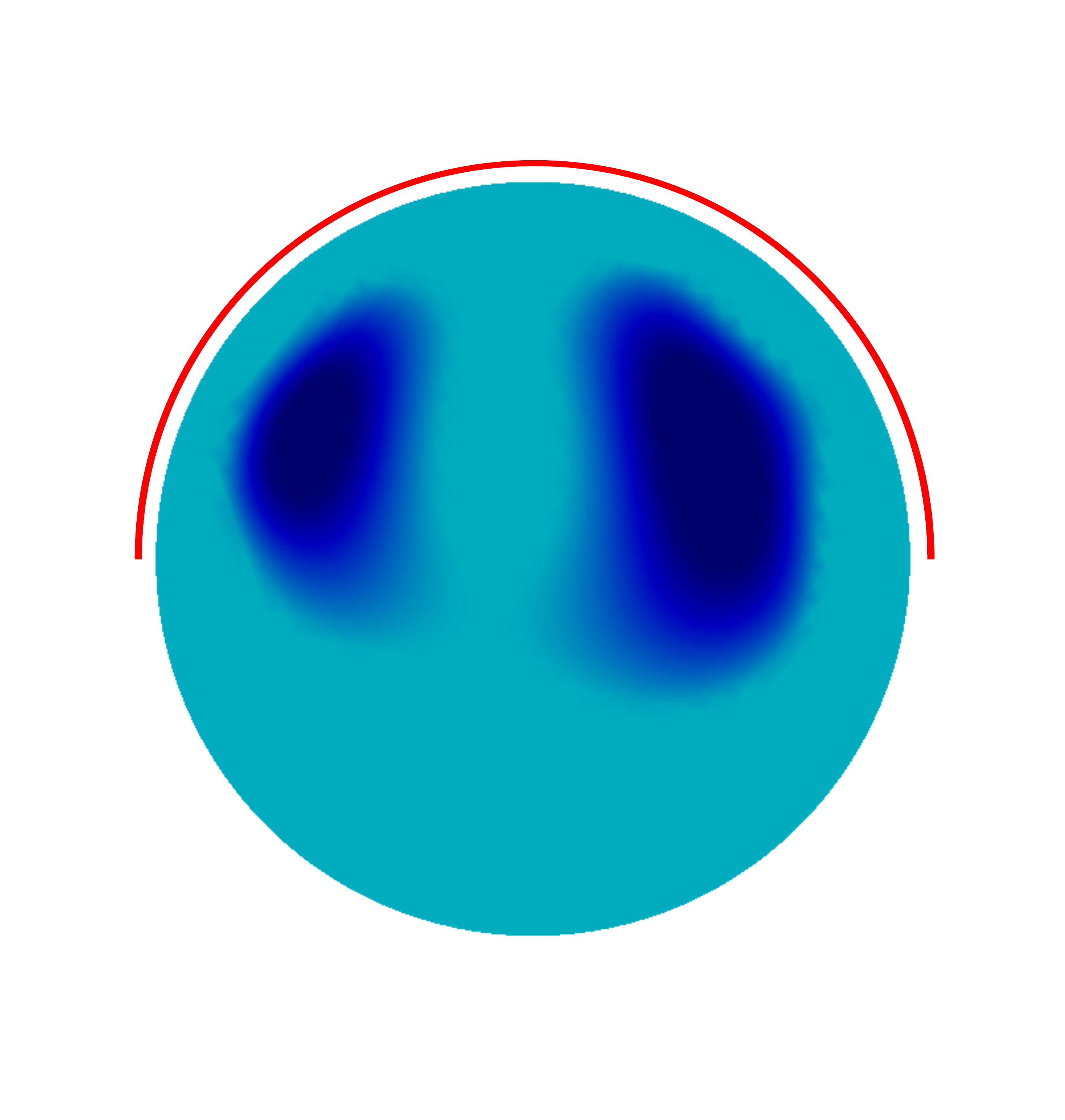}
\caption*{Top half}
\end{subfigure}%
\hfill
\begin{subfigure}[b]{.33\linewidth}
\includegraphics[width = \linewidth, trim = 3.6cm 4cm 0cm 4cm, clip=true]{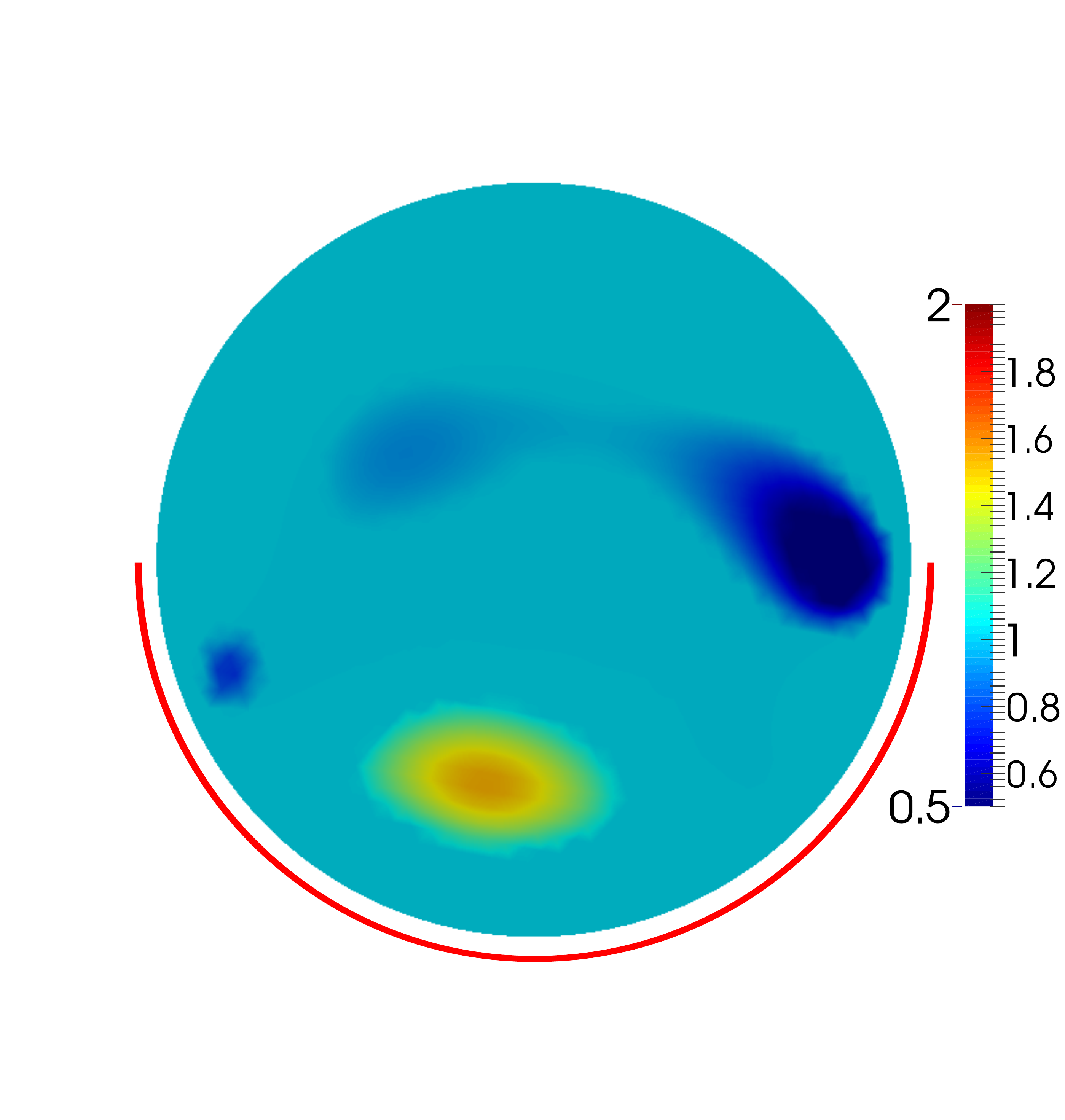}
	\caption*{Bottom half}
\end{subfigure}%
\caption{Sparse reconstruction of the phantom in \fref{fig:hnlphantom}. \textbf{Left:} $\Gamma=\po$. \textbf{Middle:} $(\theta_1,\theta_2) = (0,\pi)$. \textbf{Right:} $(\theta_1,\theta_2) = (\pi,2\pi)$.}
\label{fig:hnlpartial}
\end{figure}

In \fref{fig:hnlpartial} we observe that with data on the top half of the unit circle it is actually possible to get very good contrast and also reasonable localization of the two large inclusions. There is still a clear separation of the inclusions, while the small inclusion is not reconstructed at all. With data on the bottom half the small inclusion is reconstructed almost as well as with full boundary data, but the larger inclusions are only vaguely visible. This is the kind of behaviour that is expected from partial data EIT, and in practice it implies that we can only expect reasonable reconstruction close to where the measurements are taken. 

\begin{figure}[h]%
\centering
\begin{subfigure}[b]{.25\linewidth}
\includegraphics[width = \linewidth, trim = 3.5cm 4cm 0cm 4cm, clip=true]{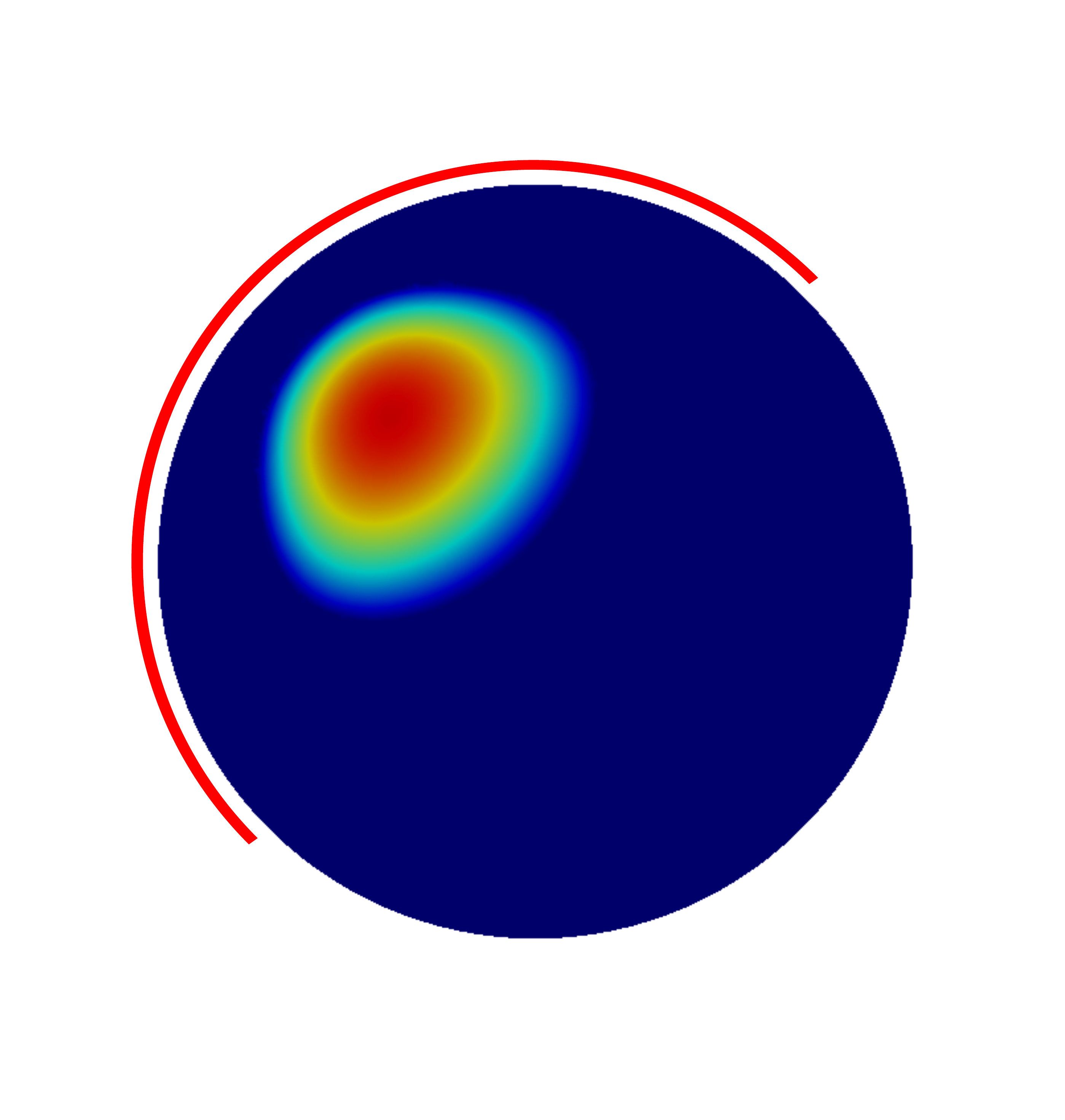}
	\caption{{}}
\end{subfigure}%
\hfill
\begin{subfigure}[b]{.25\linewidth}
\includegraphics[width = \linewidth, trim = 3.5cm 4cm 0cm 4cm, clip=true]{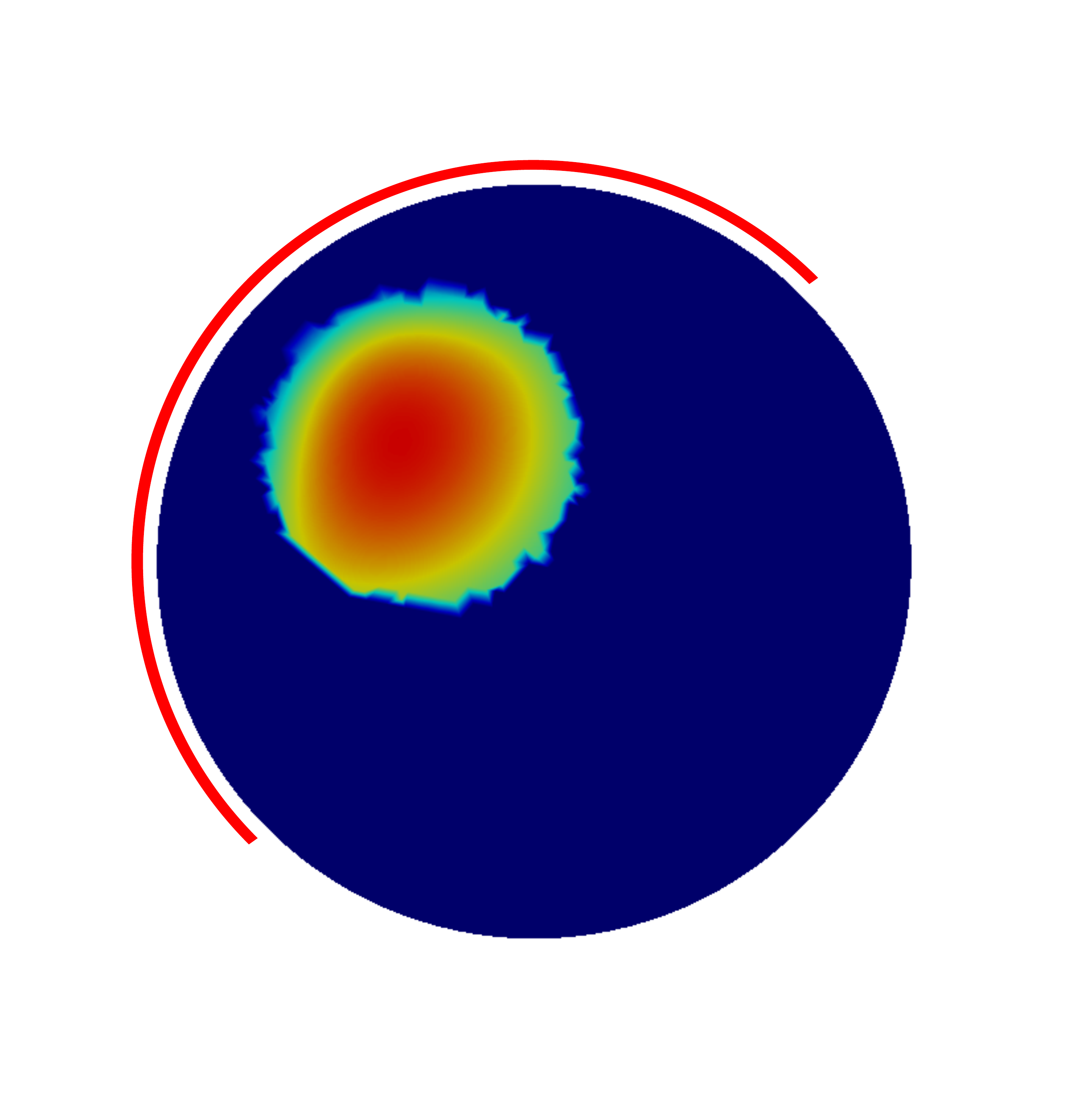}
	\caption{{}}
\end{subfigure}%
\hfill
\begin{subfigure}[b]{.25\linewidth}
\includegraphics[width = \linewidth, trim = 3.5cm 4cm 0cm 4cm, clip=true]{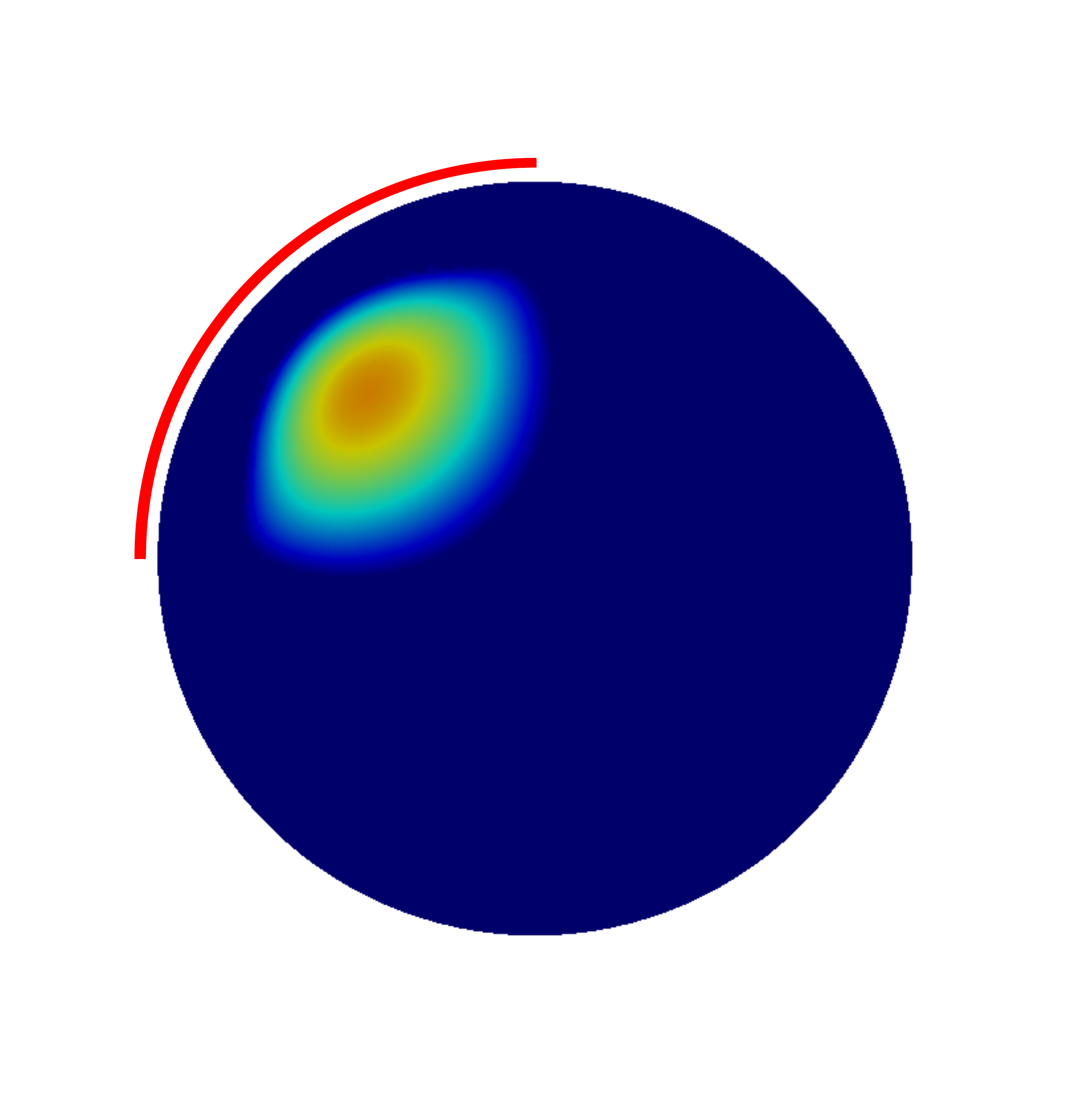}
	\caption{{}}
\end{subfigure}%
\begin{subfigure}[b]{.25\linewidth}
\includegraphics[width = \linewidth, trim = 3.5cm 4cm 0cm 4cm, clip=true]{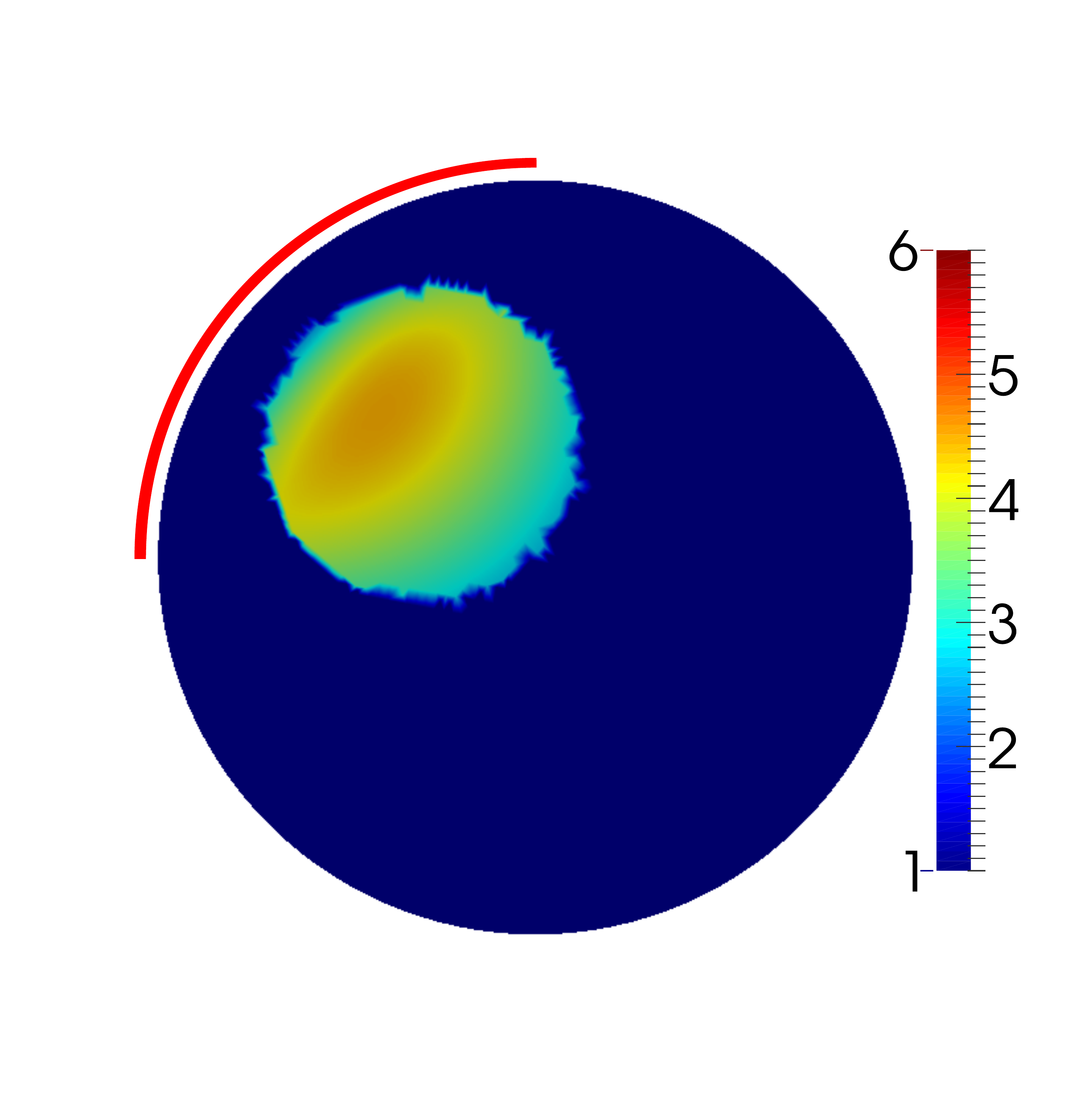}
	\caption{{}}
\end{subfigure}%
\caption{Sparse reconstruction of the phantom in \fref{fig:circphantom}. \textbf{(a):} 50\% boundary data, no prior. \textbf{(b):} 50\% boundary data with 5\% overestimated support. \textbf{(c):} 25\% boundary data, no prior. \textbf{(d):} 25\% boundary data with 5\% overestimated support.}
\label{fig:circpartial}
\end{figure}

\begin{figure}[h]%
\centering
\begin{subfigure}[b]{.25\linewidth}
\includegraphics[width = \linewidth, trim = 3.5cm 4cm 0cm 4cm, clip=true]{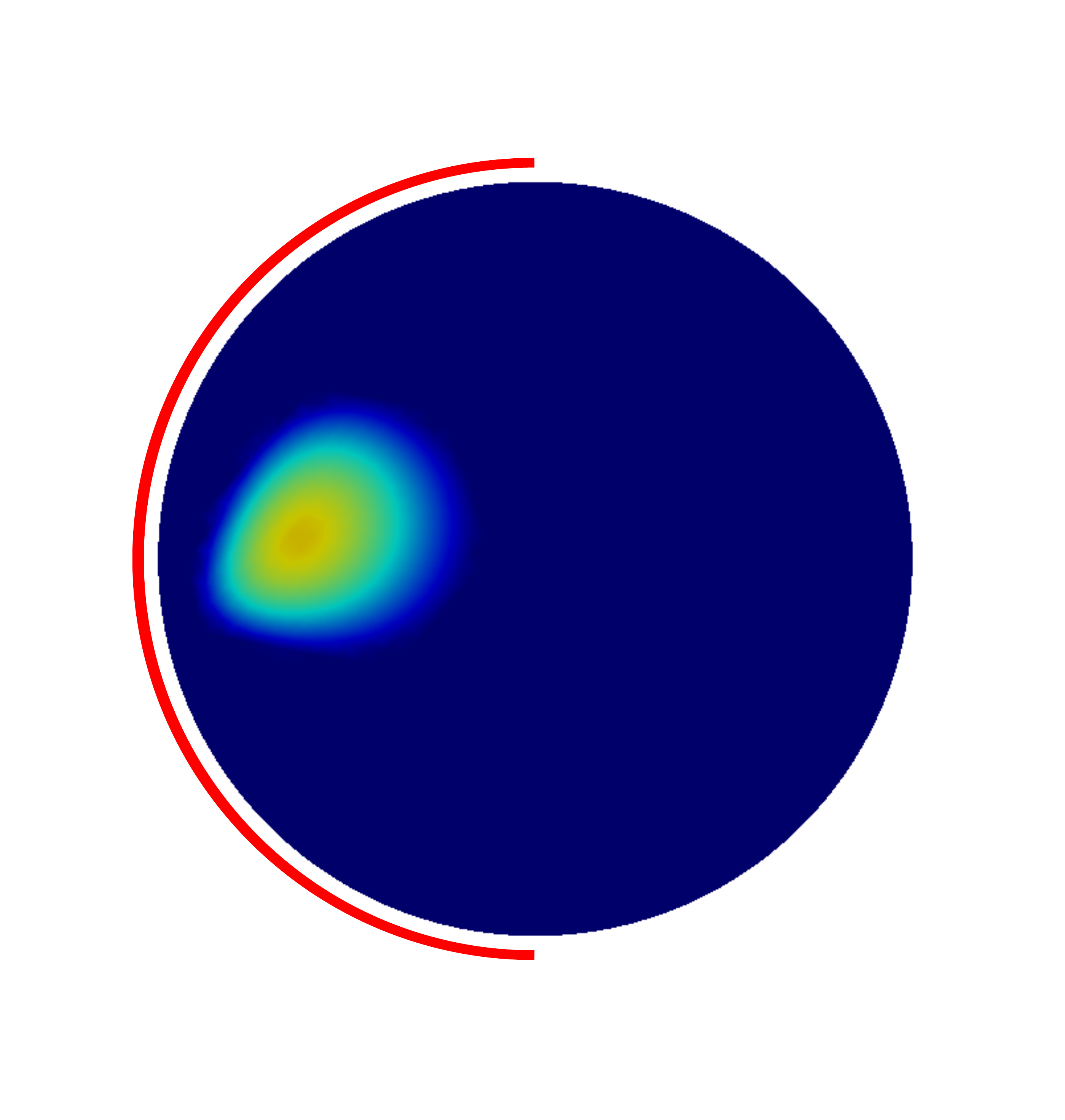}
	\caption{{}}
\end{subfigure}%
\hfill
\begin{subfigure}[b]{.25\linewidth}
\includegraphics[width = \linewidth, trim = 3.5cm 4cm 0cm 4cm, clip=true]{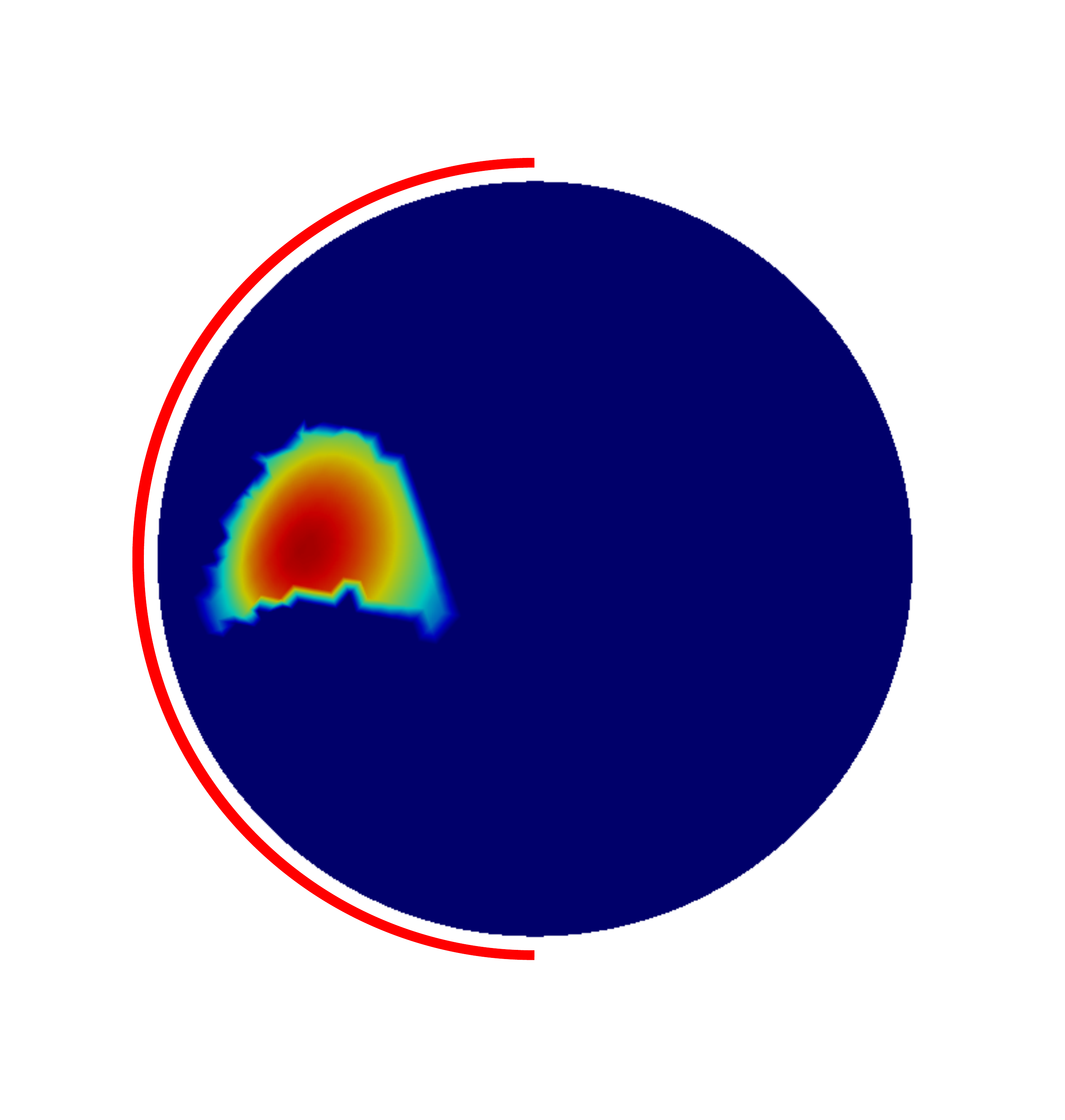}
	\caption{{}}
\end{subfigure}%
\hfill
\begin{subfigure}[b]{.25\linewidth}
\includegraphics[width = \linewidth, trim = 3.5cm 4cm 0cm 4cm, clip=true]{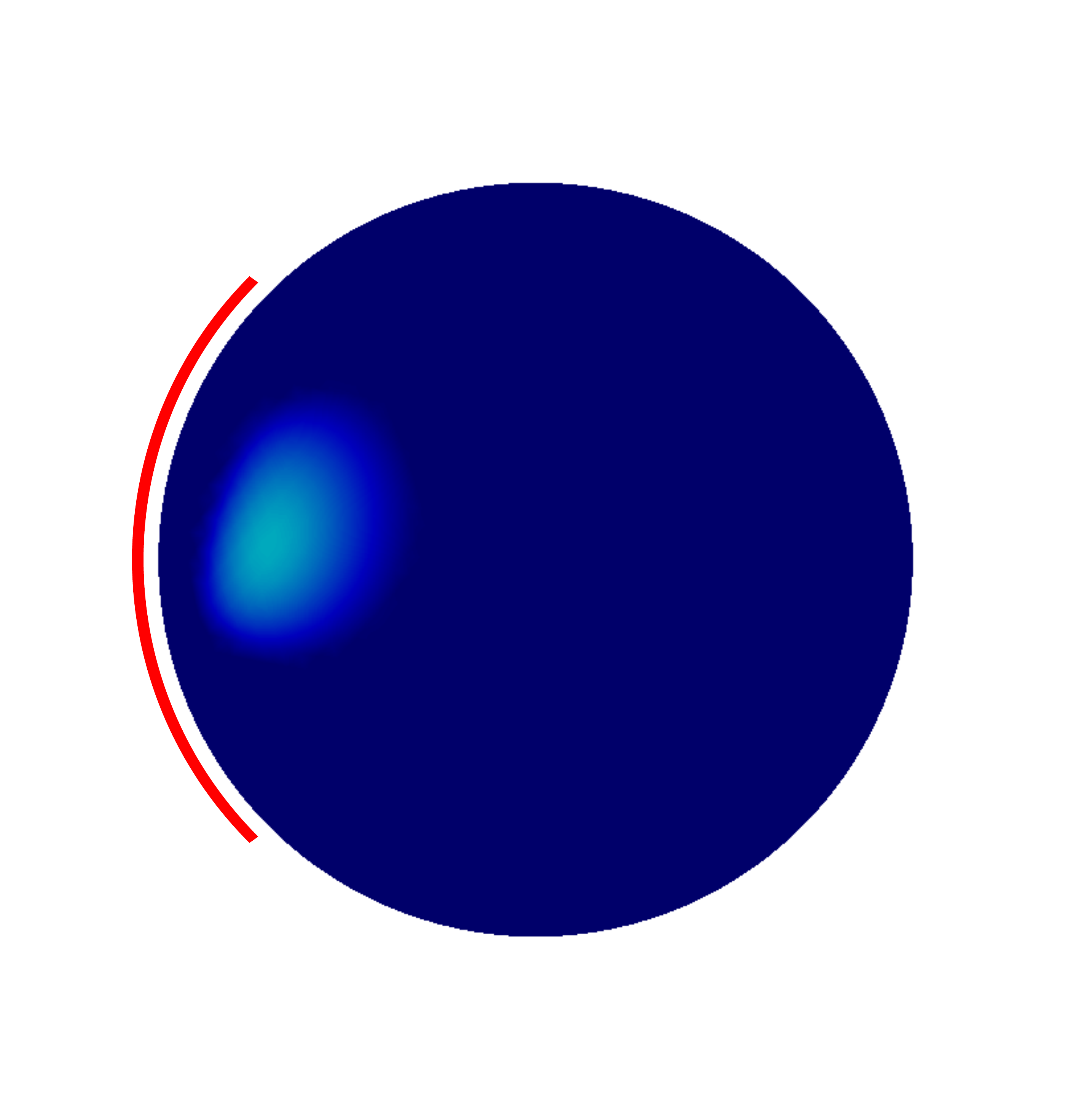}
	\caption{{}}
\end{subfigure}%
\hfill
\begin{subfigure}[b]{.25\linewidth}
\includegraphics[width = \linewidth, trim = 3.5cm 4cm 0cm 4cm, clip=true]{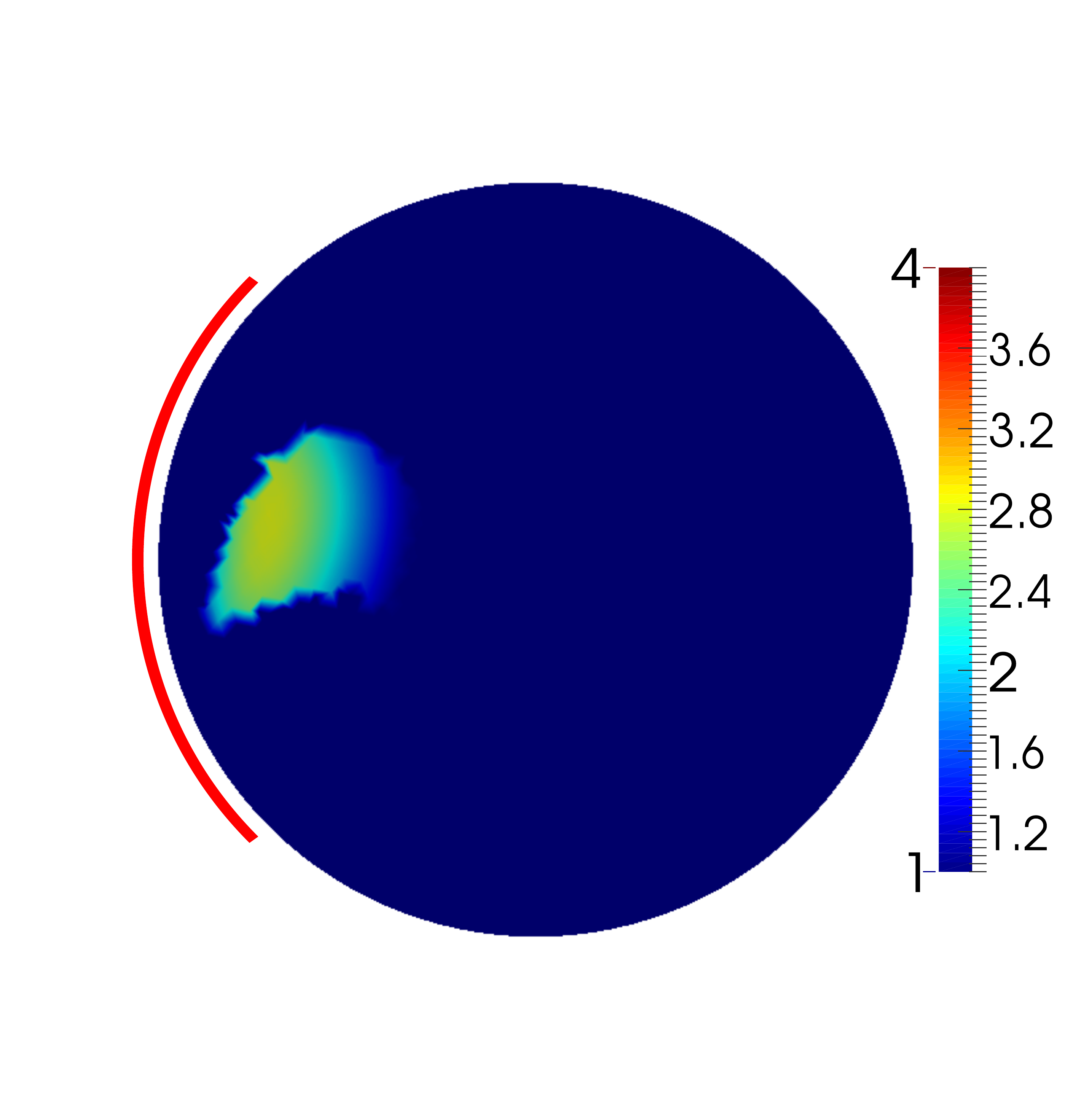}
	\caption{{}}
\end{subfigure}%
\caption{Sparse reconstruction of the phantom in \fref{fig:kitephantom}. \textbf{(a):} 50\% boundary data, no prior. \textbf{(b):} 50\% boundary data with 10\% overestimated support. \textbf{(c):} 25\% boundary data, no prior. \textbf{(d):} 25\% boundary data with 10\% overestimated support.}
\label{fig:kitepartial}
\end{figure}

In \fref{fig:circpartial} and \fref{fig:kitepartial} panels (a) and (c) it is observed that as the length of $\Gamma$ becomes smaller, the reconstructed shape of the inclusion is rapidly deformed. By including prior information about the support of $\dsr$, it is possible to rectify the deformation of the shape, and get reconstructions with almost the correct shape but with a slightly worse reconstructed contrast compared to full boundary data reconstructions. This is observed for the ball and kite-shaped inclusions in \fref{fig:circpartial} and \fref{fig:kitepartial}.

\subsection{Comparison with Total Variation Regularization} \label{sec:comparison}

Another sparsity promoting method is total variation (TV) regularization, which promotes a sparse gradient in the solution. This can be achieved by minimizing the functional
\begin{equation}
	\Psi_{\text{TV}}(\delta\gamma) \equiv \sum_{k=1}^K R_k(\delta\gamma) + P_{\text{TV}}(\delta\gamma), \enskip\delta\gamma\in\arum_0, \label{psieqTV}
\end{equation}
where the discrepancy terms $R_k$ remains the same as in \eqref{psieq}, but the penalty term is now given by
\begin{equation}
	P_{\text{TV}}(\delta\gamma) \equiv \alpha\int_{\Omega} \sqrt{\absm{\nabla\delta\gamma}^2+b}\,dx.
\end{equation}
Here $b>0$ is a constant that implies that $P_{\text{TV}}$ is differentiable, but chosen small such that $P_{\text{TV}}$ approximates $\alpha\int_{\Omega} \absm{\nabla\delta\gamma}\,dx$.

For the numerical examples, the piecewise constant phantoms in \fref{fig:circphantom} and \fref{fig:kitephantom} are used, with the same noise level as in the previous sections. The value $b = 10^{-5}$ is used for the penalty term in all the examples.

It should be noted that the color scale in the following examples is not the same scale as for the phantoms, unlike the previous reconstructions. This is because the TV reconstructions have a significantly lower contrast, in particular for the partial data reconstructions, and would be visually difficult to distinguish from the background conductivity in the correct color scale. 

\begin{figure}[h]%
\centering
\begin{subfigure}[b]{.33\linewidth}
\includegraphics[width = \linewidth, trim = 3.5cm 4cm 0cm 4cm, clip=true]{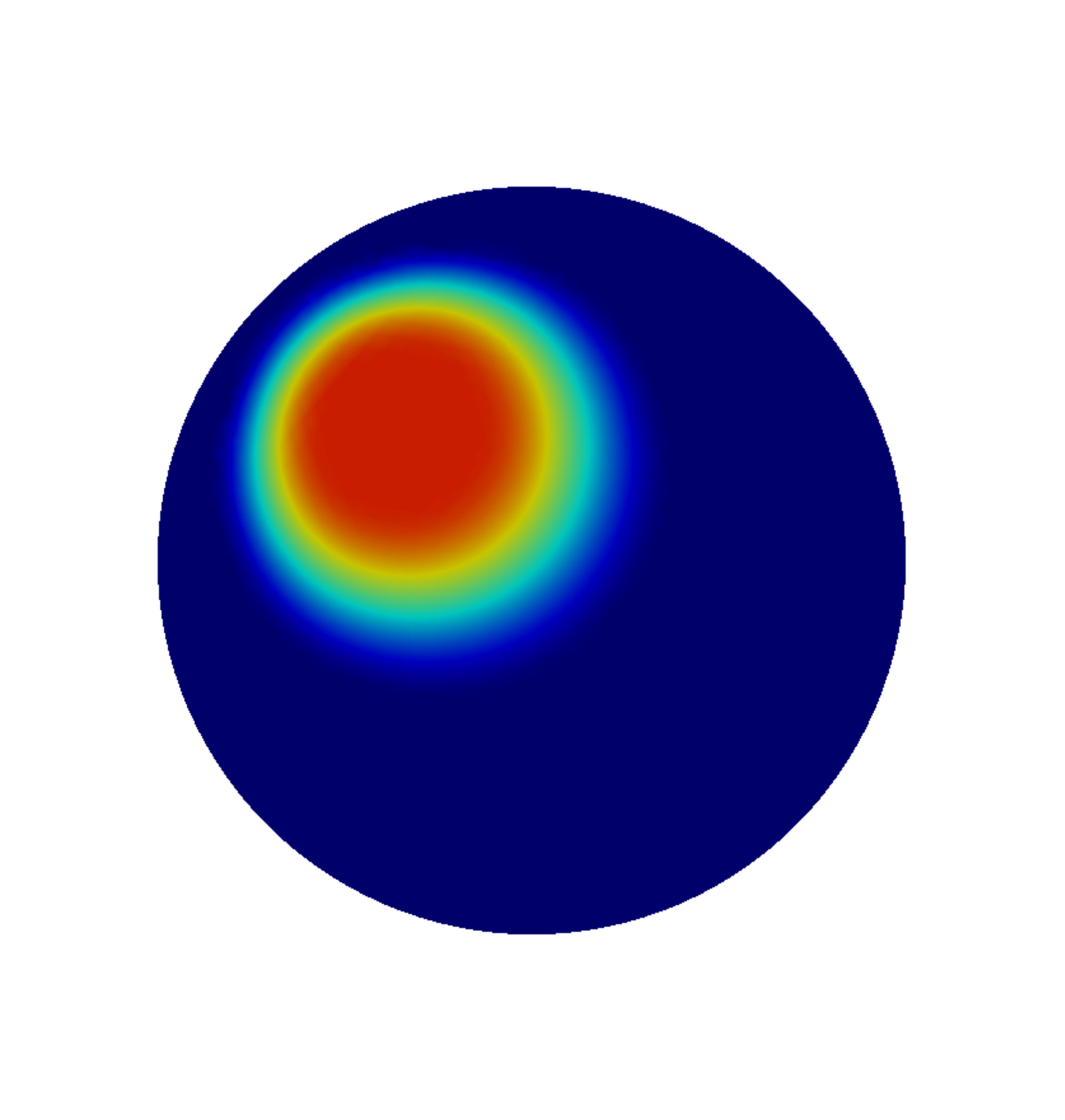}
	\caption{{}}
\end{subfigure}%
\hfill
\begin{subfigure}[b]{.33\linewidth}
\includegraphics[width = \linewidth, trim = 3.5cm 4cm 0cm 4cm, clip=true]{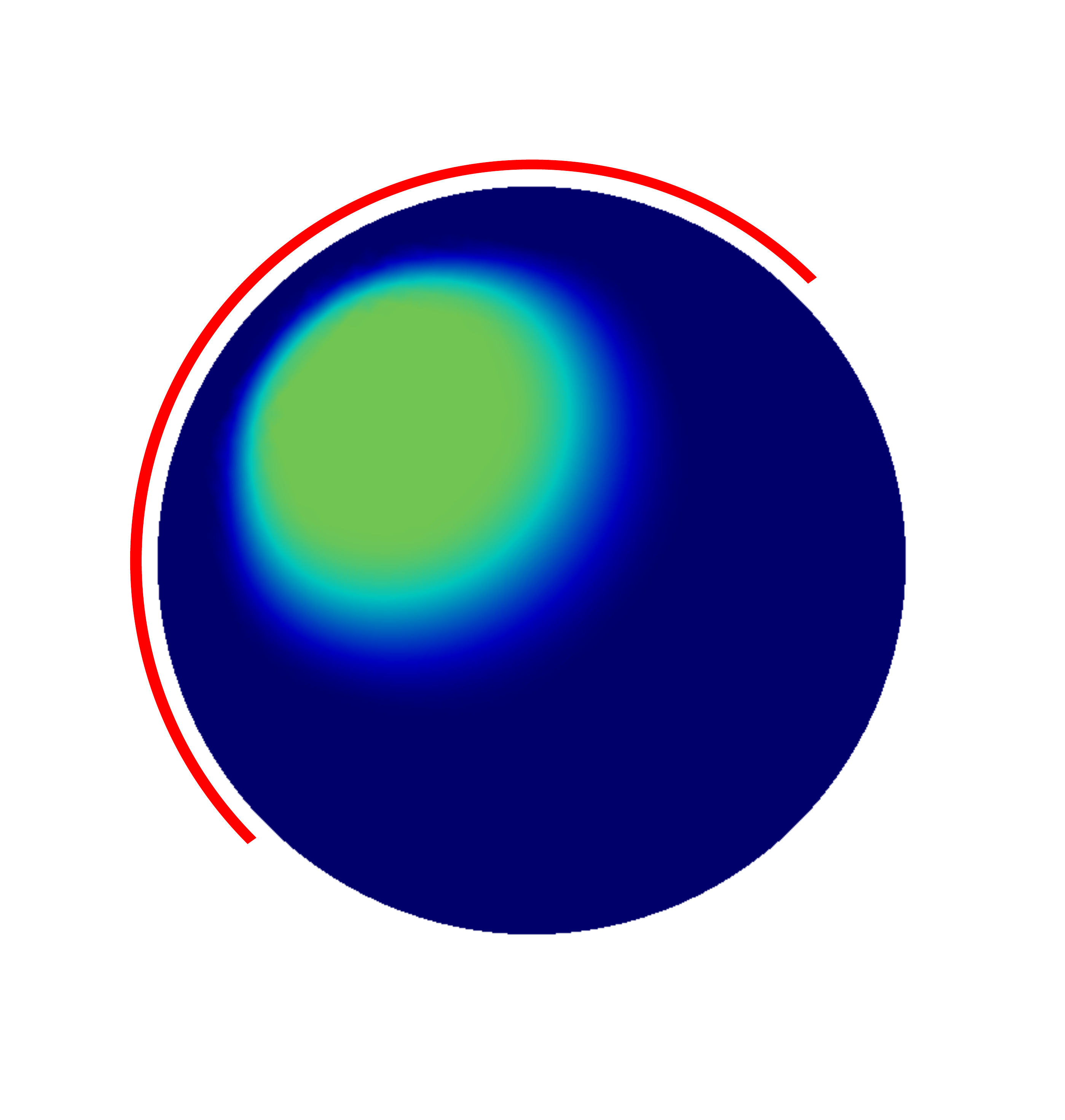}
	\caption{{}}
\end{subfigure}%
\hfill
\begin{subfigure}[b]{.33\linewidth}
\includegraphics[width = \linewidth, trim = 3.5cm 4cm 0cm 4cm, clip=true]{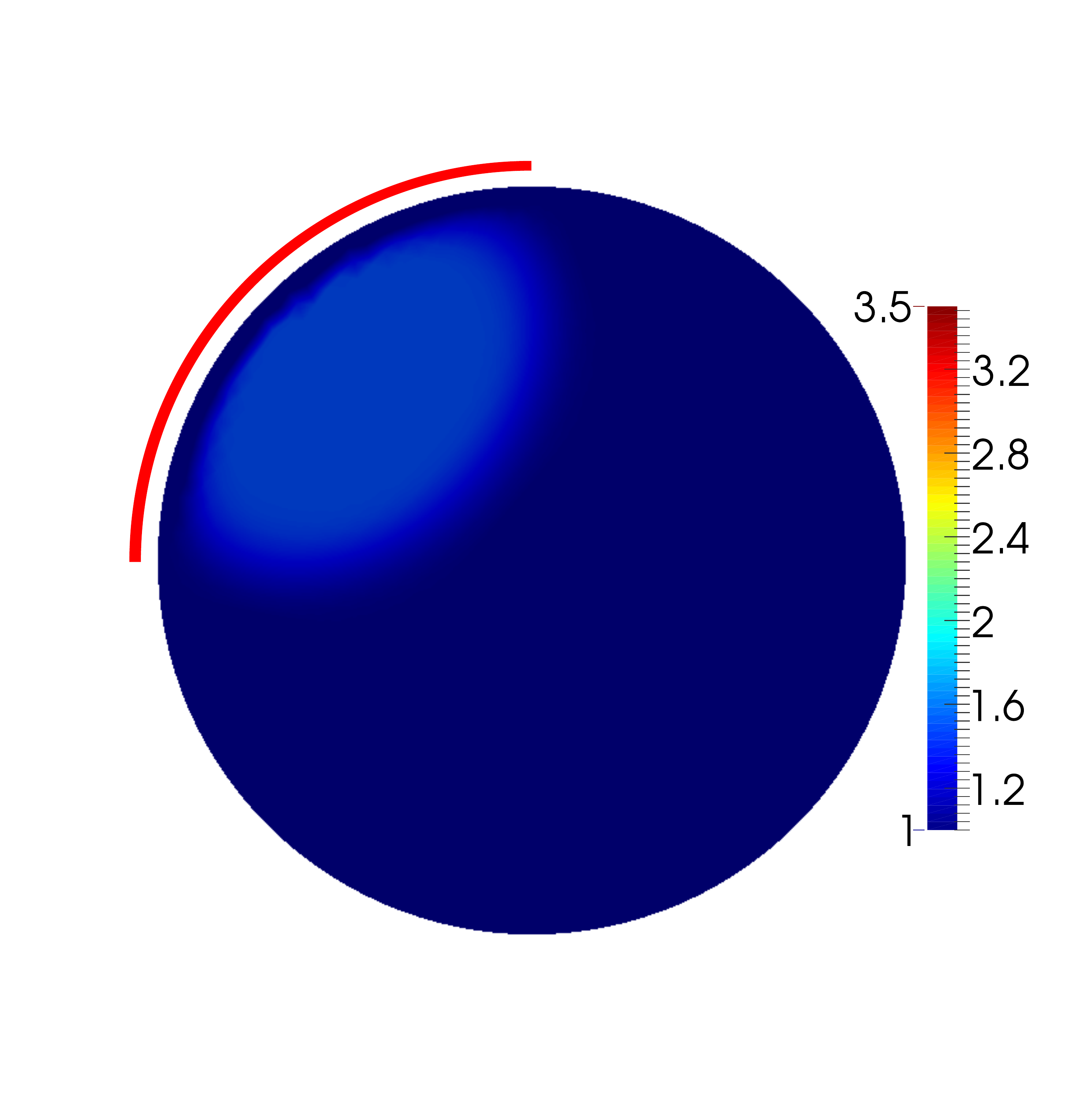}
	\caption{{}}
\end{subfigure}%
\caption{TV reconstruction of the phantom in \fref{fig:circphantom}. \textbf{(a):} Full boundary data. \textbf{(b):} 50\% boundary data. \textbf{(c):} 25\% boundary data.}
\label{fig:circpartial2}
\end{figure}

\begin{figure}[h]%
\centering
\begin{subfigure}[b]{.33\linewidth}
\includegraphics[width = \linewidth, trim = 3.5cm 4cm 0cm 4cm, clip=true]{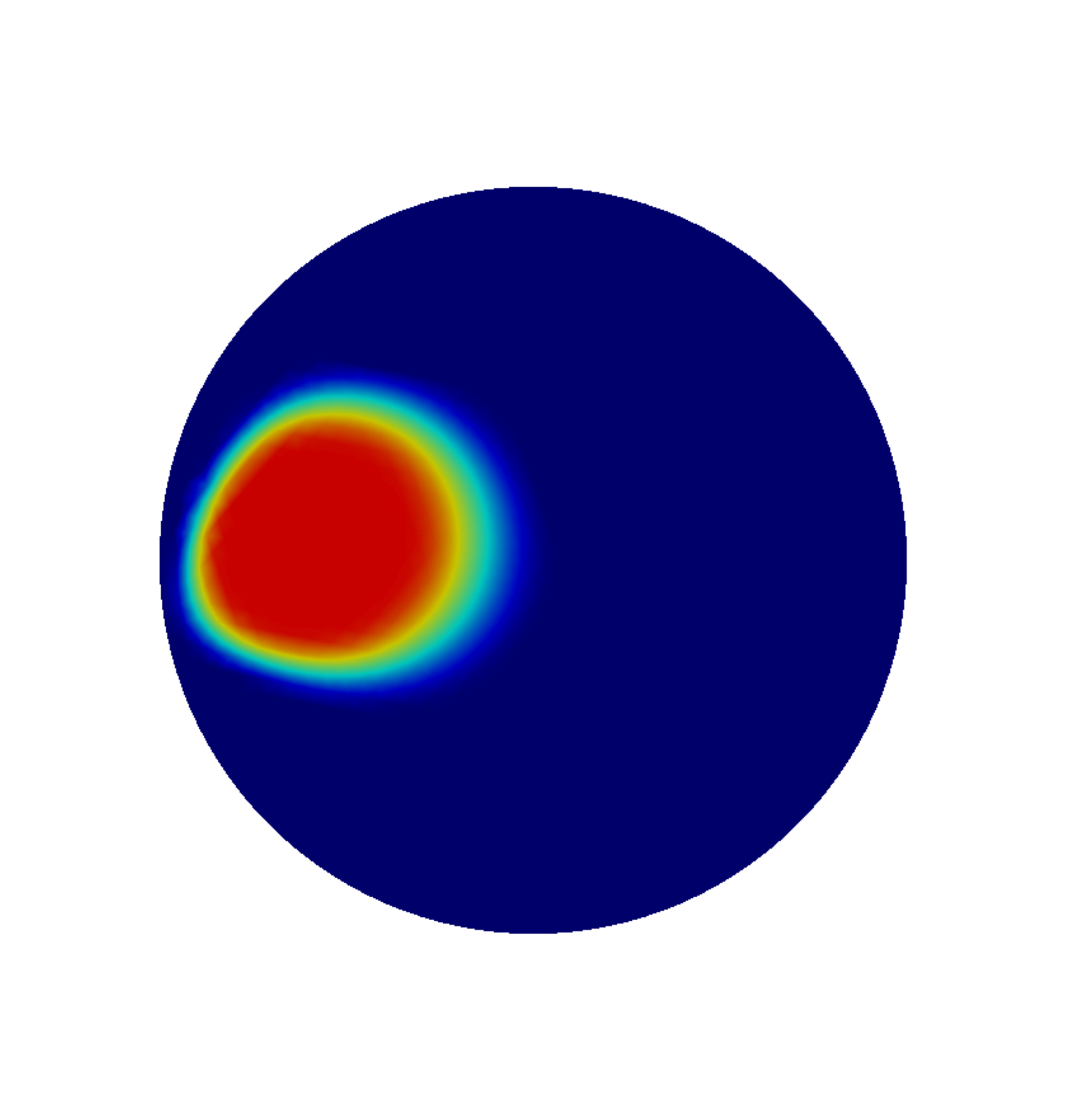}
	\caption{{}}
\end{subfigure}%
\hfill
\begin{subfigure}[b]{.33\linewidth}
\includegraphics[width = \linewidth, trim = 3.5cm 4cm 0cm 4cm, clip=true]{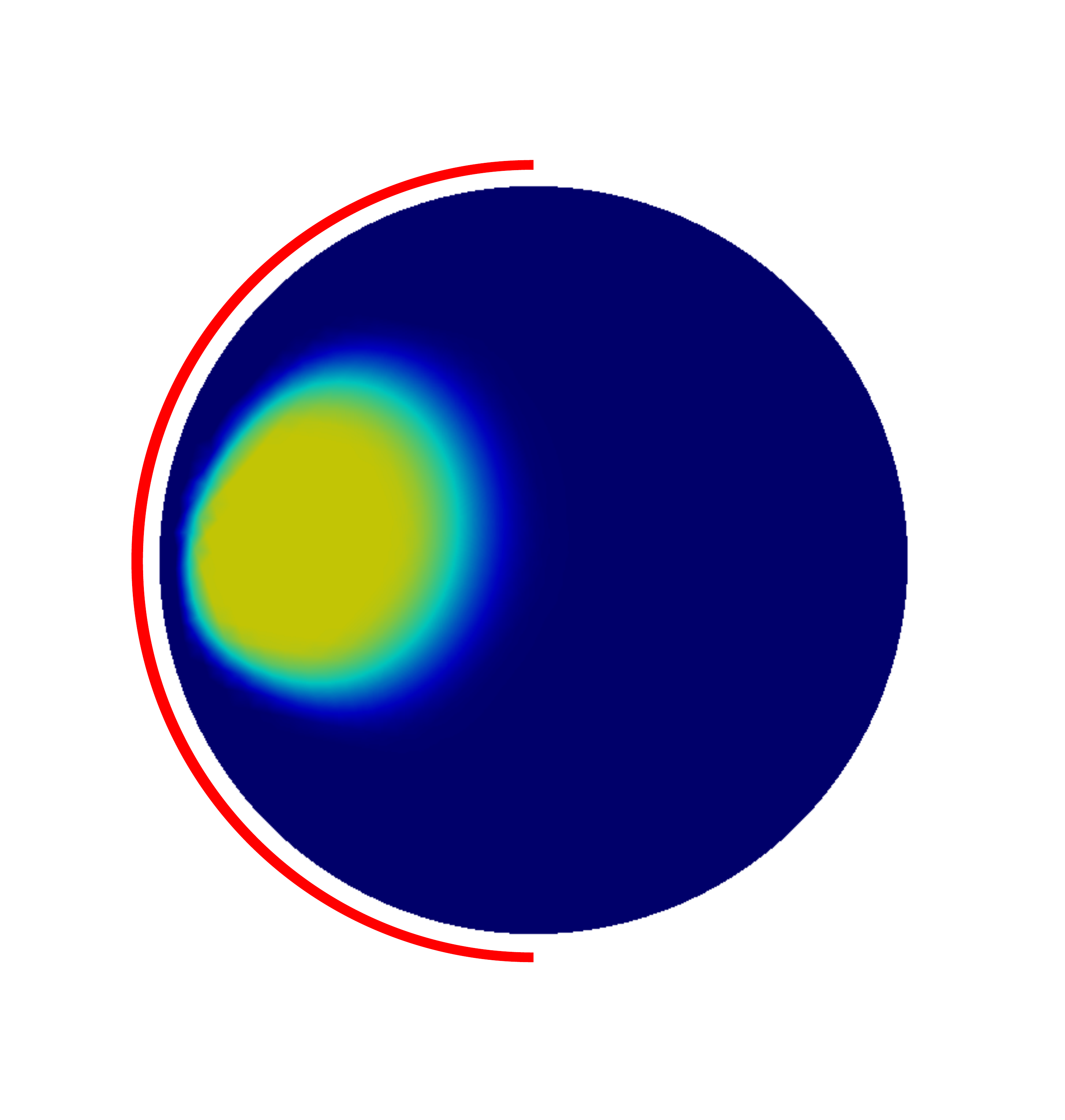}
	\caption{{}}
\end{subfigure}%
\hfill
\begin{subfigure}[b]{.33\linewidth}
\includegraphics[width = \linewidth, trim = 3.5cm 4cm 0cm 4cm, clip=true]{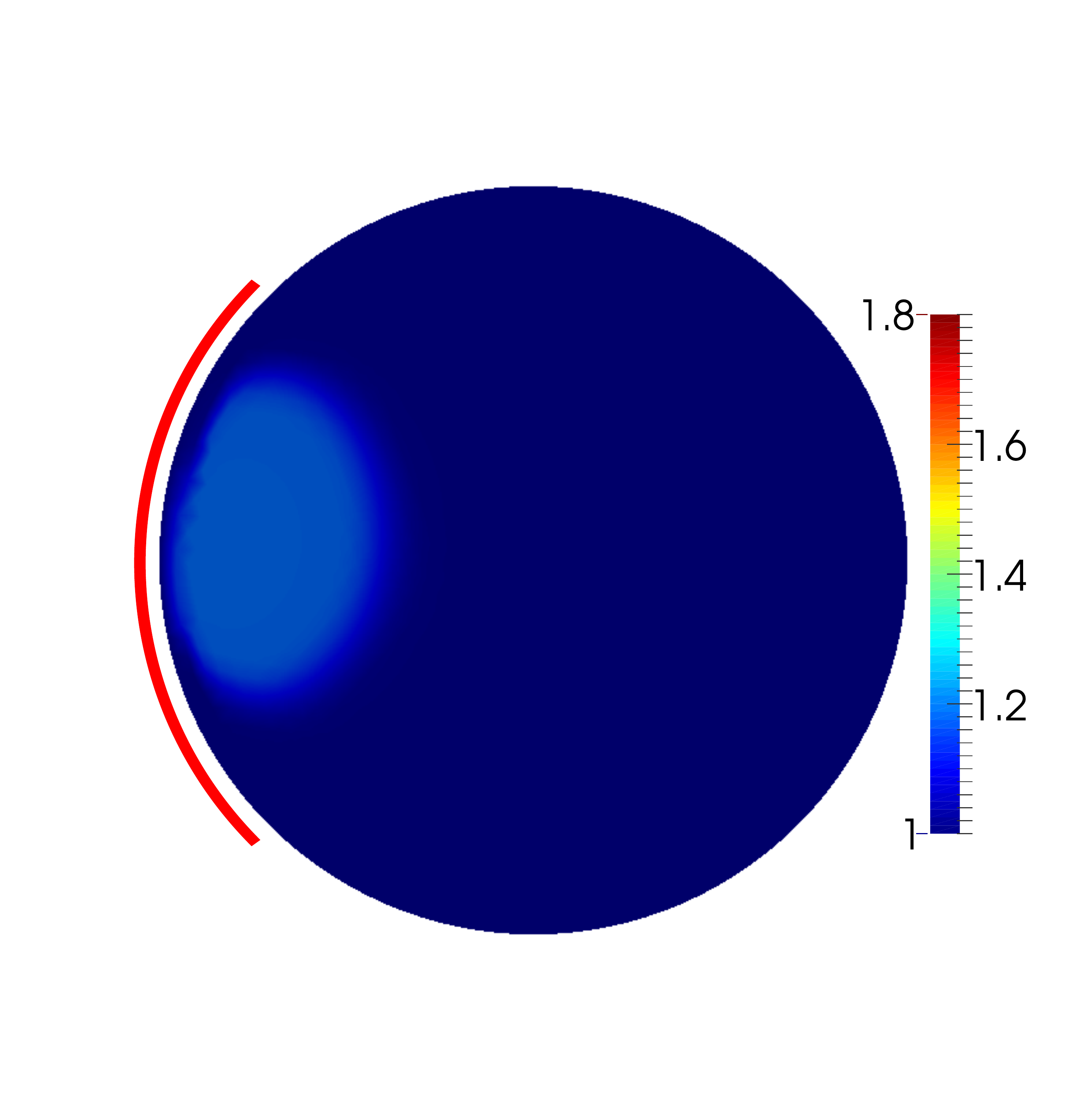}
	\caption{{}}
\end{subfigure}%
\caption{TV reconstruction of the phantom in \fref{fig:kitephantom}. \textbf{(a):} Full boundary data. \textbf{(b):} 50\% boundary data. \textbf{(c):} 25\% boundary data.}
\label{fig:kitepartial2}
\end{figure}

As seen from \fref{fig:circpartial2} and \fref{fig:kitepartial2} the support of the inclusion is slightly overestimated in the case of full boundary data, and for the partial data cases the support is slightly larger than the counterparts in \fref{fig:circpartial} and \fref{fig:kitepartial}. It is also noticed that the TV reconstructions have a much lower contrast than the $\ell_1$ sparsity reconstructions, and the contrast for the TV reconstructions is severely reduced when partial data is used. It is also observed that the same type of shape deformation occurs for both methods in case of partial data.

A typical feature of the TV regularization is piecewise constant reconstructions, however the reconstructions seen here have constant contrast levels with a smooth transition between them. There are several reason for this; and is due to the slight smoothing of the penalty term, but mostly because the discrepancy terms are not convex and may lead to local minima. The same kind of smooth transitions are also observed in TV-based methods for EIT in \cite{Borsic}. 

\section{Conclusions}\label{sec:conclusions}

We have extended the algorithm developed in \cite{jin2012}, for sparse reconstruction in electrical impedance tomography, to the case of partial data. Furthermore, we have shown how a distributed regularization parameter can be applied to utilize spatial prior information. This lead to numerical results showing improved reconstructions for the support of the inclusions and the contrast simultaneously. The use of the distributed regularization parameter enables sharper edges in the reconstruction and vastly reduces the deformation of the inclusions in the partial data problem, even when the prior is overestimated.

The algorithm can be generalized for 3D reconstruction, under further assumptions on the boundary conditions $\{g_k\}_{k=1}^K$ and the amplitude of the perturbation $\delta\sigma$. This will be considered in a forthcoming paper \cite{GardeKnudsen2014}.

\section*{Funding}

This research is supported by Advanced Grant No.\ 291405 HD-Tomo from the European Research Council.

\bibliographystyle{gIPE}
\bibliography{artikel}

\end{document}